\newtheorem{theorem}{Theorem}[section]
\newtheorem{definition}[theorem]{Definition}
\newtheorem{lemma}[theorem]{Lemma}
\theoremstyle{remark}
\numberwithin{equation}{section}
\newcommand{\R}{\mathbb{R}}
\title{Minimal points and non-holonomic controllability on compact manifolds}
\author{Sergey Kryzhevich and Eugene Stepanov}
\begin{document}

\begin{abstract}
We study the problem of non-holonomic point-to-point controllability for ODEs with drift possessing some recursion property of the flow (nonwandering or chain recurrence) and satisfying various versions of H\"ormander condition (also known as Lie bracket generating condition). We show that for the flows on compact manifolds, it suffices to assume the validity of the H\"ormander condition on the closure of the set of their minimal points only. Also, we construct a 2-dimensional example of a drift defining a chain recurrent flow and the vector fields defining the non-holonomic constraint, which together satisfy the H\"ormander condition, but the flow is not controllable in the direction of the given vector fields. 
\vspace{10pt}

\textbf{Keywords:} global controllability, control affine system, H\"{o}rmander condition, minimal points.

\vspace{10pt}

\textbf{AMS Subject Classification}: 34H05, 93B05
\end{abstract}

\maketitle

\emph{This paper is devoted to Prof. Alexander Plakhov's 65 anniversary.}

\section{Introduction}

The Chow-Rashevskii theorem~\cite{Chow39,Rashevskii38} is one of the important results in both control theory and differential geometry. Let a set of smooth vector fields on a smooth connected manifold be such that the Lie algebra generated by those fields spans the tangent bundle of the manifold (this is called H\"ormander or Lie bracket generating condition).

Then one can reach any given point of the manifold from another given point (one says in this case that global controllability holds)
using controls tangent to the planes from the distribution defined by the mentioned vector fields. In other words, if one considers
the distribution of planes defined by the given vector fields as a non-holonomic constraint for the control system, then the 
H\"ormander condition suffices for non-holonomic controllability (here, no drift is supposed). 
  
In the case of ODEs with drift, the analogous controllability problem is more complicated; the known results can be found in~\cite{BScontrol}.
In particular, corollary~31 of~\cite{BScontrol} (see also corollary~26 of the same paper) says that if the drift is nonwandering (called weakly recurrent in the quoted paper), the global controllability with non-holonomic constraint defined by a set of vector fields
is guaranteed if these vector fields together with the drift satisfy the H\"ormander condition. 
Moreover, proposition~27 of~\cite{BScontrol} asserts that when the drift is only chain recurrent but not nonwandering, then global non-holonomic controllability still holds under a stronger condition that the vector fields
defining the constraint to satisfy the H\"ormander condition themselves (i.e., without taking into account the drift).
Remarkably, if the phase space is compact, the control may be taken arbitrarily small. 

In the present paper, we suggest a new approach that uses the properties of the flow. We observe that any pseudotrajectory of a flow passes infinitely many times near the minimal points of the flow (and the time intervals between nearby passages are bounded). Thus, it suffices to assume that the H\"ormander condition is satisfied just near the set of minimal points rather than on the whole phase space. 
Both of the results mentioned from~\cite{BScontrol} are generalized in this sense.
We also show that the second of the mentioned results is rather sharp, by providing an example of a drift generating a chain recurrent (but not nonwandering) flow such that non-holonomic controllability does not take place if the vector fields defining the constraint do not satisfy the H\"ormander condition themselves, even when the H\"ormander condition is valid for these vector fields together with the drift.

The paper is organized as follows. In Section~2, we introduce the principal concepts, definitions, and facts of topological dynamics, mainly related to various types of recurrence. Besides, we discuss the H\"ormander condition and the related properties and statements, including the Chow-Rashevskii theorem. In Section~3, we formulate the existing results on affine controllability. In Section~4, we prove our original results for the case of nonwandering systems. We show that the H\"ormander condition can fail out of the set of minimal points with the main results of Section 3 holding true. Similar results for chain recurrent drifts are given in Section~5. Section~6 contains an example of a non-controllable system with chain recurrent drift showing that one cannot easily 'merge' the above statements, taking the weakest form of recurrence and of the H\"ormander condition. 

\section{Preliminaries}
\subsection{Some results on topological dynamics}

Let $M$ be a $C^1$-smooth compact connected Riemannian manifold, $V$ be a $C^1$-smooth vector field on $M$. Consider the flow $(t,x)\in \R\times M\mapsto \varphi_V(t,x)\in M$ induced by the system
\begin{equation}\label{eq1}
\dot x=V(x),
\end{equation}
that is, $x(\cdot)=\varphi_V(\cdot,x_0)$ is the solution of~\eqref{eq1} with initial conditions $x(0)=x_0$. 

\begin{definition}
We say that a point $x_0\in M$ is minimal with respect to the flow $\varphi_V$, if the closure of the orbit
\[\overline{O_V(x)}=\overline{\{\varphi_V(t,x):t\in {\mathbb R}\}} 
\]
does not contain any proper $\varphi_V$-invariant closed subset. 
\end{definition}

Denote by ${\mathrm{Min}}_V$ the closure of the set of all points minimal with respect to the flow $\varphi_V$. It is well-known that any stationary or periodic point is minimal. The converse statement is, generally speaking, false (one may consider the irrational wrapping of the torus where all the points are minimal but not periodic). Besides, any compact $\varphi_V$-invariant set contains a minimal subset (all the points of that subset are minimal), which follows from the Kuratowski-Zorn lemma. 

\begin{definition}
We say that a point $x_0\in M$ is nonwandering for~\eqref{eq1} (equivalently, for the flow $\varphi_V$), if for any neighborhood $U$ of $x_0$ there exists a couple of points $\{p,q\}\subset U$ 
and a $T\ge 1$ such that $\varphi_V(T,p)=q$. Equivalently, the point $p$ is nonwandering if for every neighborhood $U$ of $p$ there exists a $T\ge 1$
such that $\varphi_V(T,U)\bigcap U\neq \emptyset$.
\end{definition}

Denote the set of all nonwandering points of~\eqref{eq1} by $\Omega_V$. It is known that ${\mathrm{Min}}_{V}\subset \Omega_V$ (see~\cite[\S 3.3]{kaha}). The converse inclusion is, in general, false.

\begin{definition}
	We introduce the notion of a $\delta$-solution and a chain recurrent point of~\eqref{eq1} as follows.
	\begin{itemize}
		\item Given a $\delta>0$, we say that a piecewise smooth function $x\colon [a,b]\mapsto M$ is a $\delta$-solution of~\eqref{eq1}, if for any $t\in [a,b]$ such that the derivative $\dot{t}$ exists, one has
		\[
		|\dot{x} (t)-V(x(t))|_{x(t)}\le \delta,
		\]
		where $|\cdot|_x$ stands for the Riemannian norm in the tangent space $T_xM$. Equivalently, $x(\cdot)$ is a $\delta$-solution of~\eqref{eq1}, if 
		\[
		\dot{x} (t)=V(x(t)) + u(t,x(t)) 
		\]
		for some control function $u$ over $[a,b]\times M$ such that $u(t,\cdot)$ is a vector field over $M$ and 
		\[
		|u(t,x(t))|_{x(t)}\le \delta.
		\]
		\item We say that a point $x_0\in M$ is chain recurrent for~\eqref{eq1} (equivalently, for the flow $\varphi_V$), if for any $\delta>0$ there exists a $T\ge 1$ and a $\delta$-solution $x\colon [0,T)\mapsto M$ of~\eqref{eq1} such that
		\[x(0)=x(T)=x_0.\]
		In fact, we may assume that the value $T$ is greater than any positive constant that does not depend on $\delta$.
		\end{itemize}
\end{definition}

Denote the set of all chain recurrent points by ${\mathrm{CR}}_V$. It is known that
\[
\Omega_V \subset {\mathrm{CR}}_V.
\]
The converse inclusion may be wrong: as an example, one can consider the flow defined by the ODE \[\dot x=1-\cos x\] on the unit circle. 

\begin{definition}
	The ODE~\eqref{eq1} (or, equivalently, the flow $\varphi_V$) is called 
	\begin{itemize}
		\item {\em nonwandering}, if $\Omega_V=M$,
		 \item {\em chain recurrent}, if ${\mathrm{CR}}_V=M$ that is, if for every $p\in M$ and for every $\delta>0$ there is a $T>0$ and
		 a $\delta$-solution $x\colon [0,T]\to M$ of~\eqref{eq1} satisfying $x(0)=x(T)=p$,
		 \item {\em chain transitive}, if for every couple of points $\{p,q\}\subset M$ and for every $\delta>0$ there is a $T>0$ and
		 a $\delta$-solution $x\colon [0,T]\to M$ of~\eqref{eq1} satisfying $x(0)=p$, $x(T)=q$.  
	\end{itemize}
\end{definition}

Clearly, if system~\eqref{eq1} is nonwandering then it is also chain recurrent. Besides, any chain transitive flow is also chain recurrent.

Given a subset $A\subset M$ and a $\sigma>0$, we let $(A)_\sigma$ stand for the $\sigma$-neighborhood of $A$. 
The following statement is valid.

\begin{lemma}\label{lem0} For any $\varepsilon>0$ there is a $\delta>0$ and a $T>0$ such that if $x\colon [0,T]\mapsto M$ is a $\delta$-solution of~\eqref{eq1}, then 
\[\left\{x(t)\colon t\in [0,T]\right\}\bigcap 
(\mathrm{Min}_V)_{\varepsilon}\neq \emptyset.\]
\end{lemma}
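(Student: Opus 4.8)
The plan is to argue by contradiction, exploiting compactness of $M$ and the structure of chain recurrence together with the fact that $\mathrm{Min}_V$ meets every minimal set. Suppose the conclusion fails for some $\varepsilon>0$. Then for every $n\in\N$ (taking $\delta=1/n$ and $T=n$) there is a $1/n$-solution $x_n\colon[0,n]\to M$ whose trajectory avoids $(\mathrm{Min}_V)_\varepsilon$ entirely. I would like to pass to a limiting object that is an honest trajectory of $\varphi_V$ staying in the compact set $K:=M\setminus(\mathrm{Min}_V)_\varepsilon$, and then derive a contradiction from the fact that $K$, being compact and $\varphi_V$-invariant under the limit, would have to contain a minimal set — which is impossible since every minimal set lies in $\mathrm{Min}_V$.

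First I would set up the compactness argument carefully. Fix a base point; by reparametrizing (translating time) I may assume each $x_n$ is defined on $[-n/2,\,n/2]$. Using that $|\dot x_n(t)-V(x_n(t))|\le 1/n$ and $V$ is bounded on the compact manifold $M$, the family $\{x_n\}$ is uniformly Lipschitz, hence equi-Lipschitz and pointwise relatively compact; by Arzelà–Ascoli (applied on an exhausting sequence of intervals and a diagonal extraction) a subsequence converges uniformly on compact subsets of $\R$ to a curve $x_\infty\colon\R\to M$. The uniform bound on $\dot x_n-V(x_n)$ passes to the limit — either via the integral formulation $x_n(t)=x_n(0)+\int_0^t\big(V(x_n(s))+u_n(s)\big)\,ds$ with $|u_n|\le 1/n$, using continuity of $V$ and uniform convergence — so $x_\infty$ solves $\dot x_\infty=V(x_\infty)$ exactly, i.e. $x_\infty(t)=\varphi_V(t,x_\infty(0))$ for all $t\in\R$. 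Moreover each $x_n(t)\notin(\mathrm{Min}_V)_\varepsilon$, and the complement of the open set $(\mathrm{Min}_V)_\varepsilon$ is closed, so the uniform limit satisfies $x_\infty(t)\notin(\mathrm{Min}_V)_{\varepsilon}$ — more precisely $x_\infty(t)\in M\setminus(\mathrm{Min}_V)_\varepsilon$, though to be safe one should note it lies in the closed set $M\setminus(\mathrm{Min}_V)_\varepsilon$, which is what matters.

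Now conclude. The closure $\overline{O_V(x_\infty(0))}$ is a nonempty compact $\varphi_V$-invariant subset of $M$, and since the whole orbit of $x_\infty$ avoids the open neighborhood $(\mathrm{Min}_V)_\varepsilon \supset \mathrm{Min}_V$, this closure is disjoint from $\mathrm{Min}_V$; in particular it is disjoint from the set of minimal points. But by the Kuratowski–Zorn argument quoted in the preliminaries, every nonempty compact $\varphi_V$-invariant set contains a minimal subset, every point of which is a minimal point and hence lies in $\mathrm{Min}_V$ — a contradiction.

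The main obstacle I anticipate is the limiting step: ensuring that a genuine $\varphi_V$-trajectory (defined for all $t\in\R$ and avoiding the $\varepsilon$-neighborhood) really can be extracted. One must check that the time-intervals grow ($T=n\to\infty$), justify the diagonal extraction to get a curve on all of $\R$ rather than on a bounded interval, and verify that the $\delta$-solution's piecewise-smooth control $u_n$ (a time-dependent vector field with $|u_n(t,x_n(t))|\le 1/n$) genuinely disappears in the limit — the integral formulation handles this cleanly, but one should be slightly careful that $u_n$ need only be controlled along the trajectory, which is exactly what appears in the integral. A minor secondary point is separating $\varepsilon$ from a slightly smaller $\varepsilon'$ if one is worried about the limit landing on the boundary of the neighborhood; taking the contradiction hypothesis with $\varepsilon$ but running the Zorn argument against $\mathrm{Min}_V$ itself avoids this entirely, since $\mathrm{Min}_V\subset(\mathrm{Min}_V)_\varepsilon$.
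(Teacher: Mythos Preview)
Your argument is correct and follows essentially the same route as the paper's proof: assume failure, extract via Arzel\`a--Ascoli a genuine $\varphi_V$-trajectory that avoids $(\mathrm{Min}_V)_\varepsilon$, and obtain a contradiction because any compact invariant set associated with that trajectory must contain minimal points. The only cosmetic difference is that the paper works with the $\omega$-limit set of the limiting solution (defined on $[0,\infty)$), whereas you translate time to $[-n/2,n/2]$ and use the full orbit closure on $\R$; both are nonempty compact invariant sets, so either choice yields the same contradiction.
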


\begin{proof} Suppose the statement of the lemma does not hold. Then there exists an $\varepsilon>0$ and a sequence of $\delta_k$-pseudotrajectories $x_k\colon [0,T_k]\mapsto M$ such that 
\begin{enumerate}
\item $\delta_k>0$, $\lim_k \delta_k=0$,
\item $\lim_k T_k\to+\infty$,
\item $\{x_k(t)\colon t\in [0,T_k]\}\bigcap ({\mathrm{Min}}_V)_\varepsilon= \emptyset$.
\end{enumerate}
By the Ascoli-Arcel\'a theorem, there is a subsequence $x_k$ (not relabeled) converging to a solution $x_*$ of~\eqref{eq1} uniformly over the compact intervals of time. Then
\begin{equation}\label{eq_minV1}
	\overline{\left\{x_*(t)\colon t\ge 0 \right\}}\bigcap ({\mathrm{Min}}_V)_\varepsilon= \emptyset.
\end{equation}The $\omega$-limit set for the trajectory of $x_*(t)$ is nonempty in view of the compactness of $M$, hence it contains some minimal points. On the other hand, this set cannot intersect with $({\mathrm{Min}}_V)_\varepsilon$ in view of~\eqref{eq_minV1}, and this contradiction proves the lemma.
\end{proof}

The following more or less folkloric lemma will be used in the proofs below.

\begin{lemma}\label{lm_chaintrans1} Let the flow $\varphi_V$ of the system \eqref{eq1} defined on a connected Riemannian manifold be chain recurrent. Then it is a chain transitive. 
\end{lemma}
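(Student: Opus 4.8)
The plan is to show that chain recurrence together with connectedness forces chain transitivity. The natural tool is the \emph{chain recurrence equivalence relation}: for $p,q\in M$ write $p\sim q$ if for every $\delta>0$ there exist $\delta$-solutions joining $p$ to $q$ and $q$ to $p$. This is clearly an equivalence relation on $\mathrm{CR}_V=M$ (reflexivity is exactly chain recurrence, symmetry is built in, and transitivity follows by concatenating $\delta$-solutions, using that the concatenation of a $\delta$-solution and a $\delta$-solution is again a $\delta$-solution since the defining inequality is pointwise). The equivalence classes are the \emph{chain components}, and the statement to be proved is that there is exactly one such class, namely all of $M$.

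First I would verify that each chain component is closed. Fix $\varepsilon>0$; by continuity of $V$ and compactness of $M$ there is $\eta>0$ such that any two points at distance $<\eta$ can be joined, in both directions, by an $\varepsilon$-solution on a short time interval (move along a short geodesic; the velocity error is controlled because $V$ is bounded and the geodesic is short, and one can also insert a waiting segment to meet the $T\ge 1$ requirement by going back and forth). Hence if $q$ lies in the closure of a chain component $C$ and $p\in C$, then for each $\delta>0$ pick $q'\in C$ with $d(q,q')<\eta(\delta)$; concatenating a $\delta$-solution from $p$ to $q'$ with an $\varepsilon$-solution from $q'$ to $q$ (with $\varepsilon\le\delta$) gives a $\delta$-solution from $p$ to $q$, and symmetrically. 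Thus $q\in C$ and $C$ is closed. Since distinct chain components are disjoint closed sets, and $M$ is the union of them, if there were more than one component we could not yet conclude a contradiction from closedness alone — the components need not be finite in number — so the genuine work is to show the components are also open.

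The key step, and the main obstacle, is openness of the chain components. Fix $p$ and let $C$ be its component. Using the same $\eta=\eta(\delta)$ as above: for $\delta>0$ and any $q$ with $d(q,p)<\eta(\delta)$, one can join $p$ to $q$ and $q$ to $p$ by $\delta$-solutions on short time intervals; combined with $p\sim p$ this shows every such $q$ satisfies $q\sim p$, so $q\in C$. Hence $C$ contains a neighborhood of each of its points, i.e. $C$ is open. Now $M$ is connected and partitioned into disjoint nonempty open sets (the chain components); therefore there is only one, $C=M$. This says precisely that for every $p,q\in M$ and every $\delta>0$ there is a $\delta$-solution from $p$ to $q$ — with $T\ge 1$, which we may always arrange as noted in the definition of chain recurrence by padding with back-and-forth motion — which is chain transitivity. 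The only point demanding care is the uniform "short-time connection" estimate producing $\eta(\delta)$; this is where compactness of $M$ and the $C^1$ (in particular, local boundedness and uniform continuity) regularity of $V$ are used, and where one checks the $T\ge 1$ normalization can be met.
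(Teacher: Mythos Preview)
Your overall strategy—show that chain components are clopen and use connectedness—is a reasonable route and is different from the paper's direct construction, but the argument as written has two genuine gaps.

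First, the ``move along a short geodesic'' justification is incorrect. If $\gamma$ is a geodesic segment from $p$ to $q$ traversed in time $\tau$, then $|\dot\gamma - V(\gamma)|$ is of order $|V(\gamma)|$ whenever $|\dot\gamma|$ is small, and of order $|\dot\gamma|$ otherwise; in neither regime is it $\le\varepsilon$ unless $|V|$ itself is small along $\gamma$. So nearby points are \emph{not}, in general, joined by $\varepsilon$-solutions on short time intervals. Second, and more seriously, your openness step confuses one $\delta$ with all $\delta$: having a $\delta$-solution $p\leftrightarrow q$ whenever $d(p,q)<\eta(\delta)$ does \emph{not} give $q\sim p$, because $q\sim p$ requires $\delta'$-solutions for every $\delta'>0$, and $\eta(\delta')\to 0$. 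Invoking ``$p\sim p$'' does not repair this. A correct version of the topological approach works at each fixed $\delta$ separately: one shows that the forward reachable set $A_\delta(p)$ is both open and closed, and here the crucial input is chain recurrence of the \emph{target} point (use a $\delta/2$-loop at $q$ and perturb its endpoint or starting point to absorb nearby points), not merely of $p$. Connectedness then gives $A_\delta(p)=M$ for every $\delta$, which is chain transitivity.

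The paper proceeds differently and constructively: it joins $p$ to $q$ by an arc, subdivides it into points $x_1=p,\ldots,x_N=q$ with $d(x_i,x_{i+1})\le\delta$, and at each $x_i$ uses chain recurrence to produce an $\varepsilon/2$-loop $x_i\to x_i$ of duration $>1$, then applies a quantitative endpoint-perturbation lemma (lemma~3.2 of~\cite{KryzhSte21}) to add a further control of size $<\varepsilon/2$ on a terminal subinterval so as to land at $x_{i+1}$ instead of $x_i$, provided $\delta$ is below the threshold $\rho(\varepsilon)$ supplied by that lemma. Concatenating these pieces gives the desired $\varepsilon$-solution from $p$ to $q$. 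This is precisely the ``perturb the end of a recurrence loop'' mechanism that is missing from your sketch; note that it too uses chain recurrence at every intermediate point, not only at $p$.
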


\begin{proof}
	Let $M$ be a connected Riemannian manifold equipped with the Riemannian distance $d$, $\{p,q\}\subset M$, $p\neq q$ and $\theta\subset M$ be an arc (i.e.\ an injective curve) with endpoints $p$ and $q$. Assume that $\varepsilon>0$ is given. Divide $\theta$ into $N$ consecutive points $x_i$, with $x_1=p$, and $x_N=q$, so that $d(x_i, x_{i+1})\leq \delta$ for all $i=1,\ldots, N-1$, and $\delta$ to be chosen later. Once we arrive at the point $x_i$ at the instance $t_i\geq 0$, $i=1,\ldots, N-1$, we choose the control $u^1_\varepsilon$  with $|u^1_\varepsilon (\cdot)| < \varepsilon/2$ so as to return to the same point $x_i$ after time $T_i>1$. According to lemma~3.2 from~\cite{KryzhSte21} there is a $\tau \in (0, T_i)$, a $\rho > 0$ both  depending on $\varepsilon$ and a piecewise continuous control $u^2_\varepsilon$  with $|u^2_\varepsilon (\cdot)| < \varepsilon/2$ different from zero only on $(t_i+T_i-\tau, t_i+T_i)$ such that using the control
	\[
	u_\varepsilon:= u^1_\varepsilon+u^2_\varepsilon
	\]
	we arrive at $x_{i+1}$ at the instance $t_{i+1}:= t_i+T_i$, once $\delta$ is chosen so as $\delta\in (0,\rho)$.   
\end{proof}

\subsection{The H\"{o}rmander condition}

Let $M$ be a $C^\infty$ smooth manifold and 
$X_j$, $j=1,\ldots, m$ be vector fields over $M$. We consider the set $\mathcal{X}$ of all $X_j$ and all their continuous Lie brackets (if the vector fields are smooth enough so that the respective brackets will be defined), i.e.\
\[
\mathcal{X}:= \{X_i\}_{i=1}^m \bigcup  \{[X_i, X_j]\}_{i,j=1}^m \bigcup \{[[X_i, X_j], X_k]\}_{i,j,k=1}^m\cup \ldots 
\]

\begin{definition}\label{def_Hor1}
The set of vector fields 
$X_j$, $j=1,\ldots, m$ is said to satisfy the \textit{H\"{o}rmander condition}, if 
\[
\mathrm{span}\, \{Y(x)\colon Y\in \mathcal{X}\} = T_x M.
\]
for every $x\in M$, where $T_x M$ stands for the tangent space to $M$ at $x$.
\end{definition}

The celebrated Chow-Rashevskii theorem asserts that if $M$ is connected (not necessarily compact) and the set of vector fields 
$X_j$, $j=1,\ldots, m$ satisfies the H\"{o}rmander condition, then for every couple of points $\{p,q\}\subset M$ there is a
piecewise smooth curve $x\colon [0,T]\to M$ such that
\begin{align*}
&\dot x(t)=u_1(t)X_1(x(t))+\ldots+u_m(t)X_m(x(t)),\\
&x(0)=p,  \quad x(T)=q
\end{align*}
for some $T>0$ and some piecewise smooth functions $u_j\colon [0,T]\to \R$, $j=1,\ldots, m$.

Moreover, the curve $x(t)$ may be selected so that for any $t$ such that the derivative $\dot x(t)$ exists, one of the functions $u_j$ equals $1$ or $-1$ while all others vanish.

\section{Control affine systems}

In this paper, we aim to study the controllability of the control affine systems of the form
\begin{equation}\label{eq2}
	\dot x(t)=
    V(x(t))+u_1(t)X_1(x(t))+\ldots+u_m(t)X_m(x(t)).
\end{equation}
where $V$, $X_1,\ldots, X_m$ are the given smooth vector fields on $M$. Namely, given a couple of points $\{p,q\}\subset M$, we are interested in finding the piecewise continuous functions $u_j\colon [0,T] \to \R$, $j=1,\ldots, m$ for some $T>0$ such that there is a solution $x\colon [0,T]\to M$ of~\eqref{eq2} satisfying 
\[
x(0)  =p, \quad x(T)=q.
\]
If such functions can be found, then we will call~\eqref{eq2} controllable in $M$. If, moreover, one for every $\delta>0$ one can find such functions $u_j$ that also satisfy
\[
\left| u_1(t)X_1(x(t))+\ldots+u_m(t)X_m(x(t))\right |_{x(t)}\leq \delta,
\]
for all $t\in [0,T$], then~\eqref{eq2} will be called controllable with arbitrarily small controls.

In the paper \cite{BScontrol}, the following two statements are proven (corollaries 31 and 32, respectively).

\begin{lemma} \label{lm_lem1_1}
Let the set of vector fields $\{X_1,\ldots,X_m\}$ satisfy the H\"ormander condition in $M$. Then the system \eqref{eq2} is controllable on $M$.
\end{lemma}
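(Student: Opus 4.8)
The plan is to reduce the controllability of \eqref{eq2} to a combination of the Chow--Rashevskii theorem applied to the constraint fields $\{X_1,\ldots,X_m\}$ and the recurrence of the drift flow $\varphi_V$. The key observation is that, since $\{X_j\}$ satisfies the H\"ormander condition, the Chow--Rashevskii theorem gives us total freedom to move along the distribution; the drift $V$ is then essentially a nuisance that we must either neutralize or exploit. The natural strategy is: given $p$ and $q$, first use a Chow--Rashevskii path to reach any point we like in a \emph{short} time, but the drift accumulates during that time. To control this, I would introduce a rescaling of the controls: if we use large controls $u_j$, the trajectory of \eqref{eq2} is a small perturbation of a pure Chow--Rashevskii trajectory (the drift term $V$ is bounded on the compact $M$, so over a short time interval its effect is $O(\text{time})$, while the controlled displacement can be made $O(1)$). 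This suggests that for \emph{large} controls the system behaves almost like the driftless Chow--Rashevskii system and controllability follows directly.

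More carefully, the main step I would carry out is a continuity/perturbation argument: fix the Chow--Rashevskii curve $\gamma\colon[0,S]\to M$ from $p$ to $q$ with $\dot\gamma = \sum_j v_j(t) X_j(\gamma(t))$ where each $v_j$ is piecewise constant in $\{-1,0,1\}$. For a large parameter $\lambda>0$, consider the time-rescaled controls $u_j(t) := \lambda\, v_j(\lambda t)$ on the interval $[0,S/\lambda]$. Then the solution $x_\lambda$ of \eqref{eq2} with these controls and $x_\lambda(0)=p$ satisfies, after the substitution $s=\lambda t$, an ODE of the form $\frac{dx_\lambda}{ds} = \frac1\lambda V(x_\lambda) + \sum_j v_j(s) X_j(x_\lambda)$ on $[0,S]$, which converges uniformly as $\lambda\to\infty$ to the Chow--Rashevskii equation with solution $\gamma$. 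Hence $x_\lambda(S/\lambda)\to q$. This alone does not land exactly on $q$, so the finishing move is to note that we actually have an open-mapping type flexibility: extend $\gamma$ slightly past $q$ and use the fact that the attainable set from $p$ (in short time, with large controls) contains a neighborhood of $q$ — this follows because the Chow--Rashevskii attainable set is open and the perturbation is uniformly small, so a standard degree or fixed-point argument pins down the exact endpoint.

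An alternative and perhaps cleaner route, staying closer to the tools already assembled in the excerpt, is to cite the structure of the proof of Corollary~31 in~\cite{BScontrol} directly: the driftless H\"ormander condition is strictly stronger than what is needed in Lemma~\ref{lm_lem1_1}, so the statement should follow either by specializing the chain-recurrent result (Proposition~27 of~\cite{BScontrol}, which assumes exactly the driftless H\"ormander condition) to the fact that \emph{every} flow on a compact manifold has a nonempty chain recurrent set, or simply by invoking that on a compact connected manifold with the driftless H\"ormander condition the reachable set is the whole of $M$ regardless of the drift. Since Lemma~\ref{lm_lem1_1} is quoted as an already-proven result from~\cite{BScontrol} (its Corollary~31), the "proof" here is expected to be a one-line attribution rather than an argument from scratch.

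The hard part, were one to prove it from scratch, is making the perturbation argument land \emph{exactly} on $q$ rather than merely near it: the uniform convergence $x_\lambda(S/\lambda)\to q$ gives approximate controllability, and upgrading to exact controllability requires either an openness property of the Chow--Rashevskii attainable set together with a robustness estimate, or an inductive concatenation in which each successive segment corrects the small drift-induced error left by the previous one. Compactness of $M$ (hence uniform bounds on $V$ and its derivatives) is what makes all the error estimates uniform and the iteration terminate.
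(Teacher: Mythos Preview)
Your reading is correct: the paper does not prove Lemma~\ref{lm_lem1_1} at all but simply attributes it to~\cite{BScontrol} (it is listed there among ``corollaries~31 and~32''), so the paper's ``proof'' is exactly the one-line citation you anticipated. Your from-scratch rescaling sketch is also sound, and in fact it is precisely the mechanism the paper deploys later in Lemma~\ref{lm_local1}: there one compares the map $\Phi_\sigma$ built from the flows of $\sigma V + Y_j$ with the driftless map $\Phi=\Phi_0$, and takes $\sigma$ small; this is your $\lambda=1/\sigma\to\infty$ rescaling, and the openness of the Chow--Rashevskii reachable set (your ``open-mapping type flexibility'') is what upgrades approximate to exact reachability there as well.

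Two small caveats. First, your opening sentence proposes to use ``recurrence of the drift flow $\varphi_V$'', but Lemma~\ref{lm_lem1_1} assumes no recurrence hypothesis and your actual argument (correctly) never invokes one; drop that phrase. Second, your alternative route of ``specializing Proposition~27 of~\cite{BScontrol} to the fact that every flow on a compact manifold has a nonempty chain recurrent set'' does not work as written: that proposition requires $\mathrm{CR}_V=M$, not merely $\mathrm{CR}_V\neq\emptyset$, so nothing is gained by knowing only that some points are chain recurrent. The direct rescaling argument, or the bare citation, is the right way to go.
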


\begin{lemma} \label{lm_lem1_2}
Let the system \eqref{eq1} be nonwandering and the set of vector fields $$\{V, X_1,\ldots,X_m\}$$
satisfies the H\"ormander condition in $M$.
Then the system \eqref{eq2} is controllable on $M$ with arbitrarily small controls.
\end{lemma}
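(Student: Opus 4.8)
The plan is to pay nothing for motion along the drift and to produce the transverse directions from the recurrence of $\varphi_V$ near its minimal set. Fix $p,q\in M$ and $\delta>0$; since the control $u\equiv0$ is admissible, following $\varphi_V$ forward is free, so the set of points reachable from $p$ by~\eqref{eq2} with $\big|\sum_{j=1}^m u_j(t)X_j(x(t))\big|_{x(t)}\le\delta$ has a closure $\mathcal{A}\ni p$ that is positively $\varphi_V$-invariant and connected. I would aim to show $\mathcal{A}=M$; this gives $q\in\mathcal{A}$, and since $\delta$ is arbitrary the lemma follows. (Passing from ``$q\in\mathcal{A}$'' to ``$q$ genuinely reachable with controls $\le\delta$'' is a routine accessibility argument -- apply the conclusion with a slightly smaller bound and then steer exactly onto $q$ -- which I omit.)

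First I would find an open set inside $\mathcal{A}$. Being compact and positively invariant, $\mathcal{A}$ contains a minimal set $\mathcal{M}$, and necessarily $\mathcal{M}\subseteq\mathrm{Min}_V\subseteq\Omega_V$. Pick $\bar x\in\mathcal{M}$. Since the flow is nonwandering, $\bar x$ is genuinely recurrent: $\varphi_V(t_n,\bar x)\to\bar x$ for some $t_n\to+\infty$, and all these points are reachable from $\bar x$. Because the H\"ormander condition for $\{V,X_1,\dots,X_m\}$ makes~\eqref{eq2} accessible from every point in arbitrarily small time (for any control bound), and trajectories have bounded speed, the reachable set from $\varphi_V(t_n,\bar x)$ -- and hence from $\bar x$ -- has interior points arbitrarily close to $\bar x$. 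As $\bar x\in\mathcal{A}$ and $\mathcal{A}$ is stable under concatenation of admissible trajectories, we get $\bar x\in\overline{\mathrm{int}\,\mathcal{A}}$; in particular $\mathcal{A}$ contains a nonempty open set accumulating at a point of the minimal set $\mathcal{M}$.

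The decisive step is to propagate this to a neighbourhood of \emph{every} minimal set. Let $\mathcal{M}'\subseteq\mathrm{Min}_V$ be any minimal set. Chain transitivity of $\varphi_V$ (Lemma~\ref{lm_chaintrans1}, valid since nonwandering implies chain recurrent) yields, for each precision, a pseudo-trajectory of~\eqref{eq1} from a point of $\mathcal{M}$ to a point of $\mathcal{M}'$, and I would realize it by an admissible trajectory of~\eqref{eq2} with controls $\le\delta$: where the required perturbation lies in $\mathrm{span}\{X_1,\dots,X_m\}$ this is immediate; the remaining transverse corrections are produced by Chow--Rashevskii commutator manoeuvres assembled from small $X_j$-arcs and from drift arcs -- and here one crucially uses that near $\mathrm{Min}_V$ the drift \emph{genuinely returns within a uniformly bounded time}, so a backward drift motion can be obtained by flowing the drift forward at no control cost, up to an error that shrinks to $0$ as the manoeuvre is kept nearer $\mathrm{Min}_V$. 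Since $\mathrm{Lie}\{V,X_1,\dots,X_m\}=T_xM$ at every $x$, these manoeuvres reach a point near $\mathcal{M}'$ from a point of the open subset of $\mathcal{A}$ near $\mathcal{M}$, and then the argument of the previous paragraph, run at $\mathcal{M}'$, puts a whole neighbourhood of $\mathcal{M}'$ into $\mathcal{A}$. Ranging over all minimal sets and using compactness of $\mathrm{Min}_V$, we obtain $(\mathrm{Min}_V)_\varepsilon\subseteq\mathcal{A}$ for some $\varepsilon>0$. Finally, Lemma~\ref{lem0} applied to $\dot x=-V(x)$ (whose minimal set is again $\mathrm{Min}_V$) shows that the backward $\varphi_V$-orbit of every point of $M$ meets $(\mathrm{Min}_V)_\varepsilon$; together with positive $\varphi_V$-invariance of $\mathcal{A}$ this forces $\mathcal{A}=M$.

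The hard part is the propagation step, in two respects. The first is quantitative: one must ensure, uniformly near $\mathrm{Min}_V$, that the errors coming from the drift not returning exactly and from the nonlinearity of $\varphi_V$ remain negligible against the genuine displacements while the controls stay $\le\delta$, bearing in mind that a direction hidden in an iterated bracket of depth $r$ is produced only by commutator manoeuvres whose amplitude is of the order of the $r$-th root of the displacement sought. The second is conceptual and shows the hypothesis cannot be relaxed: the whole mechanism rests on \emph{genuine} recurrence of the drift near its minimal set rather than on $\delta$-recurrence -- for a merely chain recurrent drift the ``free backward drift'' is not available with small controls -- and indeed Section~6 exhibits a chain recurrent (but not nonwandering) drift, together with vector fields making $\{V,X_1,\dots,X_m\}$ satisfy the H\"ormander condition, for which~\eqref{eq2} is not controllable; so a hypothesis of this strength is genuinely needed.
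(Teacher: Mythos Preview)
Note first that the paper does not prove Lemma~\ref{lm_lem1_2} itself: it is quoted verbatim from \cite{BScontrol}. The paper's own argument is the proof of the strictly stronger Theorem~\ref{th_Chow1} (H\"ormander only on $\mathrm{Min}_V$), which of course specializes to a proof of the lemma. Comparing your strategy to that proof:

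The two share the same engine --- Krener's theorem, Lemma~\ref{lem0}, chain transitivity, and the ``backward drift for free via genuine recurrence near $\mathrm{Min}_V$'' trick --- but are organized rather differently. You argue topologically with the closed reachable set $\mathcal{A}$, first producing interior near one minimal set and then propagating, finishing with Lemma~\ref{lem0} for the \emph{reversed} flow (a nice touch, since $\mathrm{Min}_{-V}=\mathrm{Min}_V$). The paper instead builds an explicit admissible trajectory: it takes a $\delta$-pseudotrajectory $\psi$ from $p$ to $q$, uses Lemma~\ref{lem0} to mark regularly spaced times $t_j$ at which $\psi(t_j)$ is $\sigma/2$-close to $\mathrm{Min}_V$, follows the uncontrolled drift from each $\psi(t_j)$ to the next marked time (accumulating at most $\sigma/2$ of error by a Gronwall estimate on $[-4T,4T]$), and then invokes a local lemma (Lemma~\ref{lm_local}) to jump exactly onto $\psi(t_{j+1})$ inside $B_\sigma(y_{j+1})$. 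All the hard work --- your ``propagation step'' --- is packaged into Lemma~\ref{lm_local}, whose proof is precisely the careful version of your commutator sketch: Krener's theorem gives interior in $\mathcal{A}_{\varepsilon,x_1}$, nonwandering then lets one approximate $\varphi_V(-\theta,x_1)$ and $\varphi_{V\pm\varepsilon X_j}(-\theta,x_1)$ by forward-reachable points (an Euler-like iteration with step $\theta/N$ and error $O(\theta^2/N)$), and a final time-reversal plus density argument upgrades ``dense in a ball'' to ``contains a ball''.

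Your outline is sound and would go through, but two places deserve tightening. First, when you write ``$\bar x\in\mathcal{A}$ and $\mathcal{A}$ is stable under concatenation'', remember that $\mathcal{A}$ is only the \emph{closure} of the reachable set, so $\bar x$ need not itself be reachable; the paper sidesteps this by always steering onto exact points and never taking closures. Second, your propagation paragraph suggests correcting \emph{along} the pseudo-trajectory, but the recurrence you need for free backward drift is only available \emph{near} $\mathrm{Min}_V$; the paper's scheme is cleaner here because it makes corrections \emph{only} at the marked times $t_j$ and uses pure drift in between, so the commutator manoeuvres always take place inside $B_\sigma(y_j)$. If you rewrote your propagation step to respect that discipline, your argument would essentially converge to the paper's.
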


Given two positive numbers $\varepsilon$ and $\tau$, we consider the set ${\mathcal A}_{\varepsilon,\tau,p}\subset M$ of all the points $q$ such that the boundary value problem for some system \eqref{eq2} with all $|u_j(t)|\le \varepsilon$
and boundary conditions $$x(0)=p,\qquad x(\theta)=q$$
is solvable for a $\theta\in (0,\tau]$.
We also consider the sets
\[
{\mathcal A}_{\varepsilon,p} := \bigcup_{\tau>0} {\mathcal A}_{\varepsilon,\tau,p}, \quad 
{\mathcal A}_{p} := \bigcup_{\varepsilon>0} {\mathcal A}_{\varepsilon,p},
\]
that is the sets of all the points attainable from $p$ using controls bounded in the uniform norm by $\varepsilon$, and using controls with arbitrary norm, respectively.

We quote a classical Control Theory statement also given in~\cite{BScontrol}.

\begin{lemma} \label{lm_lem1_3}(Krener's theorem).
Let the set of vector fields $$\{V, X_1,\ldots,X_m\}$$
satisfy the H\"ormander condition on $M$. Then for any $\tau>0$ and any $\varepsilon>0$, the point $p$ belongs to the closure of the interior of the set ${\mathcal A}_{\varepsilon,\tau,p}$. In particular, the latter interior is non-empty.
\end{lemma}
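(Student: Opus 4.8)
The plan is to reproduce the classical argument of Krener, constructing inside $\mathcal{A}_{\varepsilon,\tau,p}$ a nested family of smooth embedded submanifolds of strictly increasing dimension, all of whose points are reachable from $p$ with admissible controls in time strictly less than $\tau$, and to show that this construction can terminate only when the dimension $n=\dim M$ is attained. It is convenient to restrict attention to piecewise constant controls with values in $[-\varepsilon,\varepsilon]^m$, so that the corresponding trajectories are finite concatenations of integral curves of the vector fields
\[
f_w:=V+\sum_{j=1}^{m}w_jX_j,\qquad w=(w_1,\dots,w_m)\in[-\varepsilon,\varepsilon]^m;
\]
any finite composition of local flow maps $\varphi_{f_w}(t,\cdot)$ applied to $p$ for positive times of total duration at most $\tau$ lands in $\mathcal{A}_{\varepsilon,\tau,p}$.

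First I would set up the induction. Let $k$ be the largest integer such that there exist a smooth embedded $k$-dimensional submanifold $N\subset M$, a point $\bar x\in N$ and a time $\bar t\in(0,\tau)$ with a relative neighbourhood of $\bar x$ in $N$ contained in $\mathcal{A}_{\varepsilon,\bar t,p}$, hence reachable from $p$ in time $<\tau$. Such a $k$ exists and satisfies $k\ge 1$: indeed, some $f_w$ does not vanish at $p$, for otherwise $V(p)=0$ and $X_j(p)=0$ for all $j$, whence every element of the family $\mathcal{X}$ built from $\{V,X_1,\dots,X_m\}$ vanishes at $p$ and $\mathrm{span}\,\{Y(p)\colon Y\in\mathcal{X}\}=\{0\}\neq T_pM$, contradicting the H\"ormander condition; flowing from $p$ along such an $f_w$ for a short positive time yields a one-dimensional trajectory arc whose points, except $p$ itself, are reached in time $<\tau$, and any of its interior points serves as $\bar x$. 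If $n=1$ we are done, so assume $n\ge 2$.

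The core of the proof is the maximality step. I claim that for the extremal $k$ above, every admissible vector field $f_w$ is tangent to $N$ at every point $x_0$ of a suitable relatively open subset $N'\subset N$ of points reachable from $p$ in time $<\tau$. Otherwise, $f_w(x_0)\notin T_{x_0}N$ for some such $x_0$ reached at time $t_0<\tau$, and then the map $(t,y)\mapsto\varphi_{f_w}(t,y)$, with $y$ ranging over a small $k$-dimensional piece of $N'$ through $x_0$ and $t$ over a short interval, is, on a neighbourhood of a point with $t>0$ small, a smooth embedding onto a $(k+1)$-dimensional submanifold all of whose points are reached from $p$ in total time $<\tau$ (after shrinking the parameter ranges), contradicting the maximality of $k$. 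Tangency of $f_{\varepsilon e_j}$ and $f_{-\varepsilon e_j}$ to $N$ along $N'$ yields tangency of $X_j$, and then of $V$; passing to an open subset $N''\subset N'$ of points reachable in time $<\tau$ we obtain that $V,X_1,\dots,X_m$ are all tangent to $N$ along $N''$. Since the Lie bracket of two vector fields tangent to a submanifold is again tangent to it, every element of $\mathcal{X}$ is tangent to $N$ along $N''$. Evaluating at any point $x\in N''$, the H\"ormander condition gives $T_xM=\mathrm{span}\,\{Y(x)\colon Y\in\mathcal{X}\}\subseteq T_xN$, so $n\le k$. Hence $k=n$ and $\mathcal{A}_{\varepsilon,\tau,p}$ has nonempty interior.

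Finally, to obtain that $p$ lies in the closure of $\mathrm{int}\,\mathcal{A}_{\varepsilon,\tau,p}$, I would repeat the argument verbatim with $\tau$ replaced by an arbitrary $\tau'\in(0,\tau]$, getting $\mathrm{int}\,\mathcal{A}_{\varepsilon,\tau',p}\neq\emptyset$; since $\mathcal{A}_{\varepsilon,\tau',p}\subseteq\mathcal{A}_{\varepsilon,\tau,p}$, this interior is contained in $\mathrm{int}\,\mathcal{A}_{\varepsilon,\tau,p}$. As $M$ is compact, the norms $|f_w|$ are bounded by a constant $C$ independent of $w\in[-\varepsilon,\varepsilon]^m$, so every point of $\mathcal{A}_{\varepsilon,\tau',p}$ lies within Riemannian distance $C\tau'$ of $p$; letting $\tau'\to0$ produces interior points of $\mathcal{A}_{\varepsilon,\tau,p}$ arbitrarily close to $p$. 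I expect the main difficulty to be purely technical — checking that the sets swept out in the maximality step are genuine smooth embedded submanifolds near the chosen points and that all reaching times remain strictly below $\tau$ after the successive shrinkings — rather than conceptual; the reduction to piecewise constant controls and the uniform speed bound are exactly what make this bookkeeping routine.
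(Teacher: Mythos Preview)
The paper does not prove this lemma: it is quoted as a classical result from~\cite{BScontrol} (Krener's theorem) and used as a black box in the proofs of Lemma~\ref{lm_local}. Your argument is the standard Krener construction and is correct; the bookkeeping is handled properly by carrying the strict time bound $\bar t\in(0,\tau)$ through the induction hypothesis, and your observation that all $f_w$ vanishing at $p$ forces every iterated bracket to vanish there is accurate. The only points worth polishing are minor: the passage from $N'$ to $N''$ is redundant since $N'$ is already a relatively open set of points reachable in time $\le\bar t<\tau$, and in the final step the compactness of $M$ is convenient but not essential, since a local speed bound near $p$ would suffice.
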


In the next two sections, we generalize the results of Lemmata~\ref{lm_lem1_1} and~\ref{lm_lem1_2}. In particular, we show that it suffices to assume that the H\"ormander condition is satisfied on a neighborhood of the set of minimal points. 

\section{The nonwandering drift}

We show now that for controllability of systems with nonwandering drift, it is enough that the  H\"ormander condition be satisfied only over the closure of the set of all minimal points.

\begin{theorem}\label{th_Chow1}
	Let the system \eqref{eq1} be nonwandering and suppose that 
 the set of vector fields $\{V, X_1,\ldots,X_m\}$ satisfies the H\"ormander condition on ${\mathrm{Min}}_V$.
	Then the system \eqref{eq2} is controllable with arbitrarily small controls.
\end{theorem}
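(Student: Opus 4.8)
The plan is to reduce the global controllability of \eqref{eq2} to two ingredients: first, the fact (Lemma~\ref{lem0}) that any sufficiently long $\delta$-solution of \eqref{eq1} must pass through the $\varepsilon$-neighborhood of $\mathrm{Min}_V$, and second, the local version of the Chow-Rashevskii theorem near $\mathrm{Min}_V$ together with Krener's theorem (Lemma~\ref{lm_lem1_3}). The strategy is essentially that of the existing proof of Lemma~\ref{lm_lem1_2} in~\cite{BScontrol}, but localized: instead of using the H\"ormander condition at the current point of the pseudotrajectory, we only invoke it when the pseudotrajectory happens to be close to $\mathrm{Min}_V$, which Lemma~\ref{lem0} guarantees happens within a bounded time interval.

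Here is the sequence of steps I would carry out. Fix $p,q\in M$ and $\delta>0$; we must produce a genuine solution of \eqref{eq2} joining $p$ to $q$ with the control term bounded by $\delta$. Step 1: Since the drift is nonwandering, hence chain recurrent, hence (by Lemma~\ref{lm_chaintrans1}) chain transitive, there is for every $\eta>0$ an $\eta$-solution of \eqref{eq1} from $p$ to $q$. Such an $\eta$-solution is exactly a trajectory of \eqref{eq2} with $\|u\|\le\eta\le\delta$ \emph{provided} the instantaneous correction vector $\dot x - V(x)$ lies in the span of $X_1(x),\dots,X_m(x)$ — which in general it does not. So Step 2 is to replace the unconstrained $\eta$-correction by an admissible one: we use the concatenation trick. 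Break the desired path into short legs; on each leg run the drift (with zero control) for a controlled amount of time, then, because we are required to realize a small displacement transversal to the drift, invoke the H\"ormander condition to steer along the $X_j$. The point where Lemma~\ref{lem0} enters: we do not get to choose when to apply the bracket-generating maneuver, but Lemma~\ref{lem0} tells us that after a bounded time $T$ the drift flow (or any $\delta$-solution) enters $(\mathrm{Min}_V)_\varepsilon$, and there the full H\"ormander condition for $\{V,X_1,\dots,X_m\}$ holds, so by Lemma~\ref{lm_lem1_3} the attainable set $\mathcal A_{\varepsilon',\tau,z}$ from any nearby point $z$ has nonempty interior and contains $z$ in the closure of that interior. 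Step 3: chain these local controllability patches along a covering of a pseudo-orbit from $p$ to $q$, using a compactness argument on $\overline{(\mathrm{Min}_V)_\varepsilon}$ to get uniform $\varepsilon',\tau$, and using the nonwandering property of the drift to return arbitrarily close to a point whose attainable set we already control; finally pass $\varepsilon\to 0$ to land exactly on $q$ and shrink the control bound to $\delta$.

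The main obstacle, and the step that needs the most care, is Step 2–Step 3: arranging that the bracket-generating detours are performed \emph{only} inside $(\mathrm{Min}_V)_\varepsilon$ while still being able to reach an arbitrary target $q$ that may lie far from $\mathrm{Min}_V$. One has to argue that the ``transport'' between $\mathrm{Min}_V$ and a general point of $M$ can be accomplished essentially by the drift alone (up to small controls), which is where nonwandering recurrence of $V$ — not merely chain recurrence — is used: it lets us close up loops exactly, or within arbitrarily small error, so that the small-control corrections never need to act away from $\mathrm{Min}_V$. Quantitatively, the delicate bookkeeping is to keep the accumulated control uniformly below $\delta$ over a number of legs that grows as $\varepsilon\to 0$; this is handled by making the per-leg control of order $\varepsilon$ and the number of legs of order $1/\varepsilon^{c}$ with the right exponent, or, more cleanly, by using Krener's theorem to absorb all the ``small'' displacements into a single application near $\mathrm{Min}_V$ rather than distributing them. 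I would also need the folklore lemma (as in the proof of Lemma~\ref{lm_chaintrans1}, via lemma~3.2 of~\cite{KryzhSte21}) that a small $X_j$-maneuver realizing a prescribed displacement can be inserted into a long drift return loop using controls smaller than any prescribed threshold — this is the technical engine that converts ``$\delta$-solution'' into ``solution of \eqref{eq2} with control bounded by $\delta$.''
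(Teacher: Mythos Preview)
Your overall architecture matches the paper's: use chain transitivity (Lemma~\ref{lm_chaintrans1}) to get a $\delta$-pseudo-orbit $\psi$ from $p$ to $q$, use Lemma~\ref{lem0} to guarantee that $\psi$ visits $(\mathrm{Min}_V)_{\sigma/2}$ on every subinterval of length $T$, follow the pure drift between those visits, and perform all nontrivial steering inside balls $B_\sigma(y_j)$ around minimal points. That skeleton is correct.

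The gap is in your local step. Krener's theorem (Lemma~\ref{lm_lem1_3}) only tells you that $z$ lies in the closure of the interior of $\mathcal A_{\varepsilon,\tau,z}$; it does \emph{not} say that a full ball around $z$ lies in $\mathcal A_{\varepsilon,z}$, and the latter is exactly what you need in order to steer from the drifted point $x_{j+1}=\varphi_V(t_{j+1}-t_j,\psi(t_j))$ back onto the pseudo-orbit at $\psi(t_{j+1})$. The paper isolates this as a separate result (Lemma~\ref{lm_local}): if $\varphi_V$ is nonwandering and the H\"ormander condition for $\{V,X_1,\dots,X_m\}$ holds near $x_0$, then $x_0\in\mathrm{int}\,\mathcal A_{\varepsilon,x_0}$. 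Its proof is where nonwandering is genuinely used, and not in the vague ``close up loops'' sense you describe: one shows that the \emph{backward} flows $\varphi_V(-\theta,x_1)$ and $\varphi_{V\pm\varepsilon X_j}(-\theta,x_1)$ lie in $\overline{\mathcal A_{\varepsilon,x_1}}$ (by running forward for a nonwandering return time $T_1>\theta$ and stopping $\theta$ short, combined with an Euler-type construction for the $X_j$ part), so that together with Chow--Rashevskii for the symmetrized family $\{\pm V,\pm V\pm\varepsilon X_k\}$ the set $\mathcal A_{\varepsilon,x_0}$ becomes dense in a ball; density is then upgraded to full containment via Krener applied to the time-reversed system. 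Your proposal does not contain this mechanism, and without it the chaining in Step~3 does not close.

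Two smaller points. Your bookkeeping worry and the ``pass $\varepsilon\to 0$ to land exactly on $q$'' step are both spurious. In the paper's construction, for each fixed control bound $\varepsilon$ the trajectory hits $q$ exactly: at the last minimal ball one steers not to $\psi(t_N)$ but to $x_*:=\varphi_V(-(\tau-t_N),q)$, and then runs the pure drift for time $\tau-t_N$; no limit is taken, and the control bound stays at $\varepsilon$ on every leg regardless of $N$. Second, lemma~3.2 of~\cite{KryzhSte21} is used only inside the proof of Lemma~\ref{lm_chaintrans1}; it is not the engine for the local corrections here --- that role is played entirely by Lemma~\ref{lm_local}.
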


\begin{proof} 
	Under the conditions of the theorem being proven
	we have that 
		for any point $x_0\in {\mathrm{Min}}_V$ there is a neighborhood $U(x_0)$ such that the set 
		$\{V, X_1,\ldots,X_m\}$ satisfies the H\"ormander condition in $U(x_0)$.
We take a neighborhood $U_M$ of the set ${\mathrm{Min}}_V$ such that the considered set of vector fields satisfies the H\"ormander condition on $U_M$.
	We fix an $\varepsilon>0$ and later on, in this section, we only consider systems~\eqref{eq2} with controls $|u_j|\le \varepsilon$.

We assume $M$ to be equipped with its intrinsic (geodesic) Riemannian distance $d$. Since ${\mathrm{Min}}_V$ is compact, we can find a $\sigma>0$ such that for the ball $B_\sigma (x_0)$ of radius $\sigma$ centered at $x_0$ one has $B_\sigma (x_0)\subset U_M$ for every $x_0\in {\mathrm{Min}}_V$. Given this $\sigma$, we take $\delta$ and $T$ according to Lemma~\ref{lem0} with $\sigma/2$ in place of $\varepsilon$, i.e., so that for every $\delta$-solution $\psi(\cdot)$ of~\eqref{eq1} to have
	\begin{equation}\label{eq_pstMinV1}
		\left\{\psi(t)\colon t\in [0,T]\right\}\bigcap 
		(\mathrm{Min}_V)_{\sigma/2}\neq \emptyset.
	\end{equation} 
	Of course, without loss of generality, we may increase $T$ if necessary to have $T>2$.
We may now further decrease  $\delta>0$  taking it so small that additionally for every $\delta$-solution $\psi(\cdot)$ of~\eqref{eq1} and every $t\in [-4T,4T]$ we have
\begin{equation}\label{eq_pstMinV2}
	d(\psi(t),\varphi_V(t,\psi(0))\le \sigma/2.
\end{equation} 	

Consider now an arbitrary couple of points $\{p,q\}\subset M$.  The chain recurrence implies chain transitivity by Lemma~\ref{lm_chaintrans1}, and thus there exists a $\delta$-solution 
$\psi\colon [0,\tau]\mapsto M$ of~\eqref{eq1} such that $\psi(0)=p$, $\psi(\tau)=q$. Since $\tau$ may be taken as large as we like, we assume $\tau >T$ and set
	\[
	N:=\left\lfloor \frac{\tau}{T}\right\rfloor,
	\] 
	so that $N\geq 1$ and $NT \leq \tau \leq (N+1)T$ and hence
	\[
	T\leq \frac{\tau}{N}\leq 2T.
	\]
	Thus, in view of~\eqref{eq_pstMinV1} we can find points $y_j\in \mathrm{Min}_V$
	and instants of time $t_j \in [(j-1)\tau/N, j\tau/N]$
	such that 
\begin{equation}\label{eq_psrMinV4}
		d(\psi(t_j),y_j)\le \sigma/2 
\end{equation}	
		for all $j=1,\ldots,N$.

Now we construct the control using the following algorithm (see Fig.~\ref{fig_global}). 
\begin{itemize}
\item We start at the point $p$ and let the control functions $u_k(t):=0$ for $t\in [0, t_1]$, $k-1,\ldots, m$, that is, we just follow the flow induced by the drift $V$ until the instance $t_1$ thus arriving at a point $x_1:=\varphi_V(t_1, p)$. Note that \[0\leq t_1\leq \tau/N\leq 2T,\] and therefore $d(x_1, \psi(t_1))\le \sigma/2$ by~\eqref{eq_pstMinV2} so that by~\eqref{eq_psrMinV4}  we get \[d(x_1, y_1)\le \sigma.\]
Thus both $x_1\in B_\sigma(y_1)$ and $\psi(t_1)\in B_\sigma(y_1)$, and therefore by Lemma \ref{lm_local} (given below) we may use the control functions $u_1,\ldots, u_m$ to get from $x_1$ to $\psi(t_1)$. 
\item Similarly, for every $j=1,\ldots, N-1$ once we arrive a point $\psi(t_j)$ at some instance $t_j'$ we let the control functions
\[u_1(t)=\ldots=u_m(t):=0 \quad\mbox{for }t\in [t_j', t_j'+(t_{j+1}-t_j)],\] that is, we just follow the flow induced by the drift $V$ for $t_{j+1}-t_j$ instances of time, thus arriving at a point 
\[	x_{j+1}:=\varphi_V(t_{j+1}-t_j, \psi(t_j)).\]	
One clearly has
\[
t_{j+1}-t_j \leq 2\frac{\tau}{N}\leq 4T,
\]
so that $d(x_{j+1}, \psi(t_{h+1}))\le \sigma/2$ by~\eqref{eq_pstMinV2}, hence by~\eqref{eq_psrMinV4}  
one has \[d(x_{j+1}, \psi(t_{j+1}))\le \sigma.\]
Thus both $x_{j+1}\in B_\sigma(y_{j+1})$ and $\psi(t_{j+1}) \in B_\sigma(y_{j+1})$, and therefore we may use the control functions $u_1,\ldots, u_m$ to get from $x_{j+1}$ to $\psi(t_{j+1})$.
\item  Finally, once we arrive a point $\psi(t_N)$ at some instance $t_N'$, setting 
\[x_*:= \varphi_V (-(\tau-t_N), q),
\]
we get $d(x_*, \psi(t_N))\leq \sigma/2$ by~\eqref{eq_pstMinV2} since 
\[
0\leq \tau-t_N \leq \frac{\tau}{N}\le 2T.
\]
Thus by~\eqref{eq_psrMinV4} one has \[d(x_*, y_N)\le \sigma,\] which implies that both $x_*\in B_\sigma(y_N)$ and $\psi(t_N) \in B_\sigma(y_N)$, and therefore we may use the control functions $u_1,\ldots, u_m$ to get from $\psi(t_N)$ to $x_*$. It remains then to set $u_1,\ldots, u_m$ to zero, i.e.\ follow the flow of $V$ for $\tau-t_N$ instances of time to arrive finally at $q$.
\end{itemize}

\begin{figure}[!ht]
\centering
\includegraphics[height=2in]{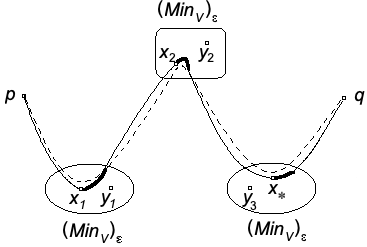}
\caption{Control in neighborhoods of the minimal sets. The dashed line stands for $\psi(t)$, the regular lines for exact trajectories, and the bold lines for trajectories of systems with a control} \label{fig_global}
\end{figure}

Note that the control functions we constructed are all zero unless we had to connect points in small neighborhoods of some minimal point of~\eqref{eq1}, so that if inside these neighborhoods they can be taken arbitrarily small, then our construction leads also to arbitrarily small controls.  
\end{proof}

The following statement on local controllability was used in the above proof.

\begin{lemma}\label{lm_local}
	Let the flow $\varphi_V$  be nonwandering, and the point $x_0$ be such that the vector fields 
	$$\{V,X_1,\ldots,X_m\}$$
	satisfy the H\"ormander condition in a neighborhood $U(x_0)$ of $x_0$. Then the point $x_0$ belongs to the interior of the set ${\mathcal A}_{\varepsilon,x_0}$.
\end{lemma}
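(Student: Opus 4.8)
The statement is a local controllability result: from $x_0$ one can reach a full-dimensional set using arbitrarily small controls. I would prove it in two stages, first producing reachability of an open set \emph{around some nearby point}, then transporting that open set back to a neighborhood of $x_0$ itself using the nonwandering property. The key external inputs are Krener's theorem (Lemma~\ref{lm_lem1_3}) for the first stage and the definition of a nonwandering point together with continuous dependence on initial conditions for the second.

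\emph{Step 1 (local attainability of an open set).}
Since $\{V,X_1,\ldots,X_m\}$ satisfies the H\"ormander condition on the neighborhood $U(x_0)$, Krener's theorem applies \emph{locally}: shrinking $U(x_0)$ if necessary so that the H\"ormander condition holds on a manifold-with-the-induced-structure containing $x_0$, we get that for the fixed $\varepsilon>0$ and any $\tau>0$, the point $x_0$ lies in the closure of the interior of ${\mathcal A}_{\varepsilon,\tau,x_0}$, and in particular $\mathrm{int}\,{\mathcal A}_{\varepsilon,\tau,x_0}\neq\emptyset$. So there is a point $z$ and an open set $W\ni z$ with $W\subset {\mathcal A}_{\varepsilon,\tau_0,x_0}$ for some $\tau_0>0$; every point of $W$ is attainable from $x_0$ in time at most $\tau_0$ using controls bounded by $\varepsilon$. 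The trouble is that $W$ need not contain $x_0$.

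\emph{Step 2 (returning near $x_0$ via nonwandering).}
Here I use that $x_0\in\Omega_V$. Pick $z\in\mathrm{int}\,{\mathcal A}_{\varepsilon,\tau_0,x_0}$ as above; by continuity of the attainable-set construction (i.e.\ continuity of solutions of~\eqref{eq2} in their initial data and controls) there is a small ball $B_r(z)$ such that every point of $B_r(z)$ is attainable from every point of a small ball $B_{r'}(x_0)$ with controls still bounded by $\varepsilon$ and time at most $2\tau_0$, say. Now I would like to steer points of $B_r(z)$ back into $B_{r'}(x_0)$ following the drift $V$ only. For that I apply the nonwandering property not at $x_0$ directly but after one ``$\varepsilon$-maneuver'': choose $z$ actually attainable, flow along $V$ — but the cleaner route is to observe that, enlarging $U(x_0)$ and applying Krener at a point $y_0$ on the $V$-orbit close to $x_0$ together with the nonwandering recurrence $\varphi_V(T, p)=q$ with $\{p,q\}$ in a prescribed neighborhood of $x_0$, one can concatenate: go from $x_0$ to a point $p$ near $x_0$ inside $\mathrm{int}\,{\mathcal A}_{\varepsilon,x_0}$ (possible because that interior accumulates at $x_0$), flow by $V$ for time $T\ge 1$ to $q$ near $x_0$, and use openness of the $\varepsilon$-attainable set to conclude that a whole neighborhood of $x_0$ is reached. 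Concretely: since $x_0\in\overline{\mathrm{int}\,{\mathcal A}_{\varepsilon,x_0}}$, fix $p\in\mathrm{int}\,{\mathcal A}_{\varepsilon,x_0}$ arbitrarily close to $x_0$; the nonwandering condition gives $T\ge 1$ with $\varphi_V(T,\cdot)$ mapping a neighborhood of $x_0$ to a neighborhood of $x_0$ meeting any prescribed neighborhood, and pulling this back through the open set $\mathrm{int}\,{\mathcal A}_{\varepsilon,x_0}$ (which is $V$-flow-invariant up to time shifts, since following $V$ costs zero control) shows $x_0$ is interior to ${\mathcal A}_{\varepsilon,x_0}$.

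\emph{Main obstacle.}
The delicate point is making the ``transport back to $x_0$'' rigorous: Krener gives an open attainable set near $x_0$ but not containing it, and one must genuinely use nonwandering-ness (not mere chain recurrence) to close the loop, tracking that the controls stay $\le\varepsilon$ throughout the concatenation — in particular that the $V$-flow segments contribute nothing to the control budget, and that the openness from Krener survives composition with the flow map $\varphi_V(T,\cdot)$, which is a diffeomorphism and hence preserves openness. I expect the write-up to hinge on the identity ${\mathcal A}_{\varepsilon,\varphi_V(t,x_0)}\supset \varphi_V(t,\cdot)$ applied appropriately together with $x_0\in\overline{\mathrm{int}\,{\mathcal A}_{\varepsilon,x_0}}$ from Krener, and on choosing the return time $T$ from the nonwandering hypothesis so that $\varphi_V(-T,\cdot)$ carries an open neighborhood of the attained point to an open neighborhood of $x_0$.
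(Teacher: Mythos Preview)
Your plan assembles the right ingredients (Krener's theorem and the nonwandering hypothesis) and you correctly flag the obstacle, but Step~2 does not close the gap. Nonwandering at $x_0$ only says that for each neighborhood $U$ of $x_0$ there is $T\ge 1$ with $\varphi_V(T,U)\cap U\neq\emptyset$; it does \emph{not} say that the backward $V$-orbit of $x_0$ itself enters your open set $W\subset\mathrm{int}\,\mathcal{A}_{\varepsilon,x_0}$, nor that $\varphi_V(T,W)$ hits a \emph{second} prescribed open set near $x_0$. Flowing $W$ forward by $V$ stays inside $\mathcal{A}_{\varepsilon,x_0}$, but nothing in your sketch forces $x_0\in\bigcup_{t\ge 0}\varphi_V(t,W)$; for that you would need Poisson stability of $x_0$ (recurrence of the point itself), which is strictly stronger than nonwandering. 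The ``pulling back through $\mathrm{int}\,\mathcal{A}_{\varepsilon,x_0}$'' line does not unpack into a rigorous step, and the displayed ``identity'' at the end is not well-formed.

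The paper fills this gap by first proving that $\mathcal{A}_{\varepsilon,x_0}$ is \emph{dense} in a ball $B_\rho(x_0)$, and only then upgrading density to openness via Krener for the time-reversed system $\dot x=-V+\sum u_j X_j$: for any $y_0\in B_\rho(x_0)$ the interior of the reversed attainable set from $y_0$ must meet the dense set $\mathcal{A}_{\varepsilon,x_0}$ at some $z_0$, whence $y_0\in\mathcal{A}_{\varepsilon,z_0}\subset\mathcal{A}_{\varepsilon,x_0}$. The density step is the substantial part: Chow--Rashevskii is applied to the enlarged family $\mathcal{Y}=\{V,-V\}\cup\{\pm V\pm\varepsilon X_k\}$ so that every nearby point is linked to $x_0$ by a concatenation of forward and backward flows of $V$ and $V\pm\varepsilon X_k$, and one then shows that each \emph{backward} segment sends points of $U(x_0)\cap\mathcal{A}_{\varepsilon,x_0}$ into $\overline{\mathcal{A}_{\varepsilon,x_0}}$. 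Nonwandering enters precisely here: for $\varphi_V(-\theta,x_1)$, Krener supplies interior attainable points arbitrarily near $x_1$, nonwandering yields a long forward $V$-return close to $x_1$, and truncating that return by $\theta$ approximates $\varphi_V(-\theta,x_1)$; an additional Euler-type construction handles $\varphi_{V\pm\varepsilon X_k}(-\theta,x_1)$. This density-then-reversed-Krener scheme is the missing idea in your plan.
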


\begin{proof} 
	We divide the proof into several steps.
	
	\textbf{Step 1.} Consider the set of vector fields 
	\[\begin{array}{l} 
		{\mathcal Y} 
		:=
		\{V, -V\}\cup \{\pm V\pm \varepsilon X_k\}_{k=1}^m.
	\end{array}
	\]
The set ${\mathcal Y}$ still satisfies the H\"ormander condition.
Thus by the classical Chow-Rashevskii theorem (theorem~17 of \cite{BScontrol}) every $x_0\in \R^d$ admits a neighborhood $U^0(x_0)$ with any two points of $U^0(x_0)$ being linked by a piecewise smooth curve, such that tangent lines to that curve (where they exist) are parallel to one of the vector fields from ${\mathcal Y}$. Up to restricting $U(x_0)$ we may suppose without loss of generality 
$U(x_0)=U^0(x_0)$.

\textbf{Step 2.} Now we claim that for any point $x_1\in U(x_0)\bigcap {\mathcal A}_{\varepsilon,x_0}$ and any $\theta>0$ the points 
$\varphi_V(-\theta,x_1)$ and  $\varphi_{V\pm \varepsilon X_i}(-\theta,x_1)$
belong to the closure of the set ${\mathcal A}_{\varepsilon, x_1}$ and, hence, to the closure of ${\mathcal A}_{\varepsilon,x_0}$.
	
\textbf{Step 2.1.} We prove the claim for $\varphi_V(-\theta,x_1)$.
Indeed, by Krener's theorem (Lemma~\ref{lm_lem1_3}), the interior of the set ${\mathcal A}_{\varepsilon, x_1}$ is non-empty and intersects with the open ball $B_{\delta}(x_1)$ for any $\delta>0$. Since the flow $\varphi_V$ is nonwandering, for any $\delta>0$ there exists a $T_1>\theta$ and two points $x_2\in {\mathcal A}_{\varepsilon,x_1}\cap B_\delta(x_1)$ and $x_3=\varphi_V(T_1,x_2)\cap B_\delta(x_1)$
(hence also $x_3\in {\mathcal A}_{\varepsilon,x_1}\cap B_\delta(x_1)$). 
	
We observe now that for any $\theta>0$ the point $x_{3,\theta}:=\varphi_V (T_1-\theta,x_3)$ belongs to the set ${\mathcal A}_{\varepsilon,x_1}$. On the other hand, $x_{3,\theta}=\varphi_V(-\theta,x_3)$ and since $x_3\in B_\delta(x_1)$, we get
$x_{3,\theta}\in \varphi_V(-\theta, B_{\delta}(x_1))$. Choosing $\delta$ sufficiently small we thus get the point $x_{3,\theta}$ as close to $\varphi_V(-\theta,x_1)$ as we like, which proves the claim.
	
\textbf{Step 2.2.} We now approximate the points $\varphi_{V\pm \varepsilon X_j}(-\theta,x_1)$ (we will do this for
$\varphi_{V- \varepsilon X_j}(-\theta,x_1)$, the argument for $\varphi_{V+ \varepsilon X_j}(-\theta,x_1)$ being completely symmetric).
To this aim,  we suggest a construction resembling Euler's numerical method. First, we do it for small values of $\theta$ and then iterate.
First of all, we observe that there exists a $C>0$ depending only on $V$, $X_j$ and $\varepsilon \in \R$, such that the distance between points 
\[
x_{4,\theta}:=\varphi_{V+\varepsilon X_j} (\theta, \varphi_V(-2\theta,x_1)) \quad \mbox{and }x_{5,\theta}:=\varphi_{V-\varepsilon X_j} (-\theta, x_1)
\]
does not exceed $C\theta^2/2$. To prove this, it suffices to consider the Taylor decompositions for $x_{4,\theta}$ and $x_{5,\theta}$ as functions of $\theta$, observing that the zero and first order in $\theta$ terms of these decompositions coincide.

Then, similarly to Step~2.1,   we take the points $x_2\in {\mathcal A}_{\varepsilon,x_1}\cap B_\delta(x_1)$ and $x_3=\varphi_V(T_1,x_2)\cap B_\delta(x_1)$
(hence also $x_3\in {\mathcal A}_{\varepsilon,x_1}\cap B_\delta(x_1)$)
 for some $T_1>2\theta$. 
We may take $\delta$ so small that the distance between $x_{4,\theta}$ and the point
\[x_{6,\theta}:=\varphi_{V+\varepsilon X_j} (\theta, \varphi_V(-2\theta,x_3))
\] be not greater than $C\theta^2/2$.
Observe that $x_{6,\theta}\in {\mathcal A}_{\varepsilon,x_1}$ and the distance between 
$x_{5,\theta}$ and $x_{6,\theta}$ is not greater than $C\theta^2$.
	
Now take any value $\theta>0$ and approximate the point $x_{5,\theta}=\varphi_{V- \varepsilon X_j}(-\theta,x_1)$ as follows. 
For an $N\in {\mathbb N}$ set $\xi_0:=x_1$. Then we recursively define the points $\xi_i$
as follows: every $\xi_i$ belongs to ${\mathcal A}_{\varepsilon,\xi_{i-1}}$ (and, consequently, to ${\mathcal A}_{\varepsilon,x_1}$) and is such that
the distance between the points $\xi_i$ and $\varphi_{V-\varepsilon X_j} (-\theta/N, \xi_{i-1})$ is not greater than $C\theta^2/N^2$. In other words, one may take $\xi_1:= x_{6,\theta/N}$, and
every $\xi_i$ is constructed from $\xi_{i-1}$ in the same way as $x_{6,\theta/N}$ is constructed from $x_1$.
Let $d(\cdot,\cdot)$ be the distance on the manifold $M$. Given the value $\theta>0$, we take the constant $L>0$ so that for any pair of points $z_0,z_1 \in M$, any 
$j=1,\ldots,m$ and any $\tau\in [0,\theta]$ we have 
\[
d(\varphi_{V-\varepsilon V_j}(-\tau,z_0),\varphi_{V-\varepsilon V_j}(-\tau,z_1))\le Ld(z_0,z_1),
\]
where for the sake of brevity we denote $\varphi:=\varphi_{V-\varepsilon V_j}$.
The latter formula implies 

\[
\begin{aligned}
		d & \left(\varphi\left(-\dfrac{(i-1)\theta}{N},\xi_{N-i+1}\right),\varphi\left(-\dfrac{i\theta}{N},\xi_{N-i}\right)\right)=\\
		&\quad d\left(\varphi\left(-\dfrac{(i-1)\theta}{N},\xi_{N-i+1}\right),\varphi\left(-\dfrac{(i-1)\theta}{N},\varphi\left(-\dfrac{\theta}{N},\xi_{N-i}\right)\right)\right)\le \\
		&\quad L d\left(\xi_{N-i+1},\varphi\left(-\dfrac{\theta}{N},\xi_{N-i}\right)\right)\le \dfrac {LC\theta^2}{N^2}.
\end{aligned}
\]

Consequently,
\begin{align*}
d& (\xi_N,\varphi_{V-\varepsilon X_j}(-\theta,x_1))=
d(\xi_N,\varphi_{V-\varepsilon X_j}(-\theta,\xi_0))\le\\
&\sum\limits_{i=1}^N d\left(\varphi_{V-\varepsilon V_j}\left(-\dfrac{(i-1)\theta}{N},\xi_{N-i+1}\right),\varphi_{V-\varepsilon V_j}\left(-\dfrac{i\theta}{N},\xi_{N-i}\right)\right)\le \dfrac{LC\theta^2}{N}
\end{align*}
which can be taken arbitrarily small provided $N$ is large.
	
\textbf{Step 3.} By Steps 1 and 2, there exists a $\rho>0$ such that the set ${\mathcal A}_{\varepsilon,x_0}$ is dense in $B_\rho (x_0)$. We prove that in fact every point $y_0\in B_\rho (x_0)$ belongs to the set ${\mathcal A}_{\varepsilon,x_1}$, which would conclude the proof of the lemma. 
	
Consider the reversed-time problem 
\begin{equation}\label{eq_time_rev}
		\dot x=-V(x)+\sum_{j=1}^m u_j(t)X_m(x)
\end{equation}
with initial conditions $x(0)=y_0$ and the same assumptions on the control functions $u_j(t)$. The H\"ormander condition is evidently satisfied for the vector fields 
$\{-V,X_1,\ldots,X_m\}$. Given $y_0$ and $\varepsilon$, let $\widetilde{\mathcal A}_{\varepsilon,y_0}$ be the analogue of the set ${\mathcal A}_{\varepsilon,y_0}$ for system \eqref{eq_time_rev}.
	
By the Krener's theorem (Lemma~\ref{lm_lem1_3}), applied to~\eqref{eq_time_rev}, the closure of the interior of the set $\widetilde{\mathcal A}_{\varepsilon,y_0}$ contains the point $y_0$, hence in particular the latter interior is non-empty.
But due to the density of ${\mathcal A}_{\varepsilon,x_0}$ this interior 
of $\widetilde{\mathcal A}_{\varepsilon,y_0}$
contains a point $z_0\in B_\rho(x_0)\cap {\mathcal A}_{\varepsilon,x_0}$. Evidently, $y_0\in {\mathcal A}_{\varepsilon,z_0}\subset {\mathcal A}_{\varepsilon,x_0}$, which shows the claim and hence finishes the proof of the lemma. 
\end{proof}

\section{The chain recurrent drift}

Now, we only assume that the flow $\varphi_V$ is chain recurrent and show that, for the controllability of the respective system, it suffices to have the validity of the restricted H"ormander condition (only in vector fields $\{X_1,\ldots, X_m\}$) on closing the set of minimal points.
 Here, however, we do not prove that the control functions $u_j$ can be chosen small. Moreover, in our construction, they are large compared to the drift.

\begin{theorem} \label{th_controlMn2}
Let the system \eqref{eq1} be chain recurrent and the set of vector fields $\{X_1,\ldots,X_m\}$ satisfy the H\"ormander condition on ${\mathrm{Min}}_V$.
Then~\eqref{eq2} is controllable in the whole manifold $M$.
\end{theorem}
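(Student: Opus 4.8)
The plan is to mimic the architecture of the proof of Theorem~\ref{th_Chow1}, replacing the two ingredients that used nonwandering-ness by analogues adapted to the chain recurrent case. The global skeleton is the same: given $\{p,q\}\subset M$, use chain transitivity (Lemma~\ref{lm_chaintrans1}) to obtain a $\delta$-solution $\psi\colon[0,\tau]\to M$ of~\eqref{eq1} joining $p$ to $q$ with $\tau$ as large as we wish; use Lemma~\ref{lem0} to guarantee that, over time windows of bounded length $T$, this $\delta$-solution passes within $\sigma/2$ of ${\mathrm{Min}}_V$; and then alternate between exactly following the drift $V$ and making short ``corrections'' inside small balls $B_\sigma(y_j)$ centered at minimal points $y_j$. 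The decomposition $NT\le\tau\le(N+1)T$, the choice of instants $t_j$, and the use of~\eqref{eq_pstMinV2} to control the drift over the intervals $[-4T,4T]$ all carry over verbatim from the proof of Theorem~\ref{th_Chow1}. What must be replaced is the local step: in Theorem~\ref{th_Chow1} the corrections were performed via Lemma~\ref{lm_local}, which required the full H\"ormander condition on $\{V,X_1,\ldots,X_m\}$ near $x_0$; here we have only the H\"ormander condition on $\{X_1,\ldots,X_m\}$, but we are no longer asked to keep the controls small.

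\textbf{The local correction step.} Near each $y_j\in{\mathrm{Min}}_V$ the vector fields $\{X_1,\ldots,X_m\}$ by themselves satisfy the H\"ormander condition, so by the Chow--Rashevskii theorem (as stated in the excerpt, with the normalization that at each time one $u_k$ equals $\pm1$ and the rest vanish) any two points of a sufficiently small ball are joined by a horizontal curve for the driftless system $\dot x=\sum_k u_k X_k$. To convert this into a statement about~\eqref{eq2}, I would exploit the freedom to take the controls arbitrarily large: rescale time. If $\gamma$ is a Chow--Rashevskii curve from $x_{j+1}$ to $\psi(t_{j+1})$ realized on $[0,\ell]$ with $\dot\gamma=\sum_k u_k(s)X_k(\gamma(s))$, then for $\lambda>0$ the reparametrized curve $s\mapsto\gamma(\lambda s)$ satisfies $\dot x(s)=\sum_k \lambda u_k(\lambda s)X_k(x(s))$, and if we add the drift term and compensate, i.e.\ use controls of size $O(\lambda)$, the drift contributes only an $O(1/\lambda)$ perturbation over the short time $\ell/\lambda$. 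Making $\lambda$ large and invoking continuous dependence on parameters (Gr\"onwall), the endpoint of the true trajectory of~\eqref{eq2} with these large controls can be made as close as we wish to $\psi(t_{j+1})$; one then absorbs the residual error either by a final small Chow--Rashevskii correction or, more cleanly, by arguing with attainable-set density exactly as in Step~3 of the proof of Lemma~\ref{lm_local}, using Krener's theorem (Lemma~\ref{lm_lem1_3}) for $\{X_1,\ldots,X_m\}$ and for $\{-V,X_1,\ldots,X_m\}$ to upgrade density of the attainable set in a ball to the ball being contained in it. This yields the needed local controllability lemma: if $\{X_1,\ldots,X_m\}$ satisfies the H\"ormander condition near $x_0$, then the points of a whole ball $B_\sigma(x_0)$ are attainable from $x_0$ for~\eqref{eq2} (with controls of unrestricted size), which is the exact replacement for Lemma~\ref{lm_local}.

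\textbf{Assembling and the main obstacle.} With this local lemma in hand, the concatenation argument of Theorem~\ref{th_Chow1} goes through word for word: at step $j$ we follow the drift from $\psi(t_j)$ for time $t_{j+1}-t_j\le 4T$ to land at $x_{j+1}$, which by~\eqref{eq_pstMinV2} and~\eqref{eq_psrMinV4} lies within $\sigma$ of $y_{j+1}$, so both $x_{j+1}$ and $\psi(t_{j+1})$ lie in $B_\sigma(y_{j+1})$; the local lemma lets us steer from $x_{j+1}$ to $\psi(t_{j+1})$; after $N$ such corrections we follow the drift for the final time $\tau-t_N$ to reach $q$. The claim about arbitrarily small controls is of course dropped. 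The step I expect to be the main obstacle is making the time-rescaling / large-control local argument fully rigorous on a manifold: one must check that the $O(1/\lambda)$ bound on the drift perturbation is uniform over the relevant compact set of base points and control sizes, that the Chow--Rashevskii curves can be chosen with length bounded uniformly for endpoints in $B_\sigma(y_j)$ (so that the rescaled time $\ell/\lambda$ is genuinely small), and that the final density-to-interior upgrade via Krener's theorem is applied to the correct (driftless, or reversed-drift) auxiliary system. These are the same kind of estimates as in Lemma~\ref{lm_local}, but with the roles of ``small control + drift bracket'' and ``large control + geometric bracket'' interchanged, so care is needed that no implicit use of the drift in the bracket generation sneaks in.
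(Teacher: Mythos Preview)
Your proposal is correct and follows essentially the same route as the paper: the global concatenation argument is taken verbatim from Theorem~\ref{th_Chow1} (the paper explicitly notes that nonwandering-ness was used only inside Lemma~\ref{lm_local}), and your local correction near minimal points is the same time-rescaling idea the paper uses in Lemma~\ref{lm_local1}, namely that large controls $u_k$ of order $\lambda=\sigma^{-1}$ turn the drift into a negligible perturbation of the driftless Chow--Rashevskii system. The only cosmetic difference is that the paper packages the local step more directly, by writing the composed flow map $\Phi_\sigma(\varepsilon_1,\ldots,\varepsilon_N):=\varphi_{\sigma V+Y_N}(\varepsilon_N,\ldots\varphi_{\sigma V+Y_1}(\varepsilon_1,x_0)\ldots)$ as a continuous perturbation of the Chow--Rashevskii surjection $\Phi_0$ onto $B_{2\delta}(x_0)$ and concluding that $\Phi_{\sigma_0}([-\varepsilon,\varepsilon]^N)\supset B_\delta(x_0)$ for small $\sigma_0$, thereby avoiding your density-then-Krener upgrade.
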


\begin{proof}
The proof repeats that of Theorem~\ref{th_Chow1} only using Lemma~\ref{lm_local1} instead of Lemma~\ref{lm_local} (observe that in the proof of Theorem~\ref{th_Chow1}, the nonwandering property of the drift was only used in Lemma~\ref{lm_local}, for the rest one needs only chain recurrence).
\end{proof}

The following local statement (in fact, very similar to proposition~27 of \cite{BScontrol}) has been used in the above proof. 

\begin{lemma}\label{lm_local1}
	Let the flow $\varphi_V$ be chain recurrent, and the point $x_0$ be such that the vector fields 
	$$\{X_1,\ldots,X_m\}$$
	satisfy the H\"ormander condition in a neighborhood $U(x_0)$ of $x_0$. Then the point $x_0$ belongs to the interior of the set 
	${\mathcal A}_{x_0}$
	of all the points attainable from $x_0$. 
\end{lemma}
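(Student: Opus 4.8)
The plan is to mimic the structure of the proof of Lemma~\ref{lm_local} (the nonwandering case), replacing the H\"ormander condition on $\{V,X_1,\ldots,X_m\}$ by the stronger hypothesis that $\{X_1,\ldots,X_m\}$ alone satisfies H\"ormander in $U(x_0)$, and compensating for the weaker recurrence (chain recurrence instead of nonwandering) precisely where the nonwandering property was used, namely in returning close to a starting point. First I would apply the Chow--Rashevskii theorem directly to $\{X_1,\ldots,X_m\}$ on $U(x_0)$: this already gives that any two points of a small neighborhood $U^0(x_0)\subset U(x_0)$ are joined by an admissible curve tangent to $\pm X_k$, hence $U^0(x_0)\subset {\mathcal A}_{x_0}$ essentially for free — this is the content of Lemma~\ref{lm_lem1_1} localized. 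So the nontrivial point is not local reachability in the direction of the $X_k$'s but rather showing that the drift flow $\varphi_V(-\theta,\cdot)$ also leads to points in $\overline{{\mathcal A}_{x_0}}$, and then closing up the argument as in Step~3 of Lemma~\ref{lm_local}.

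The key steps, in order, are as follows. Step~1: fix a neighborhood $U^0(x_0)$ on which Chow--Rashevskii applies to $\{X_1,\ldots,X_m\}$, so ${\mathcal A}_{x_0}\supset U^0(x_0)$ near $x_0$. Step~2: show that for $x_1\in U^0(x_0)\cap{\mathcal A}_{x_0}$ and any $\theta>0$ the point $\varphi_V(-\theta,x_1)$ lies in $\overline{{\mathcal A}_{x_0}}$. Here instead of a single nonwandering return I use a $\delta$-solution: by chain recurrence of $\varphi_V$ there is, for every $\delta>0$, a $\delta$-solution $z\colon[0,T_1]\to M$ with $z(0)=z(T_1)=x_1$ and $T_1>\theta$. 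Such a $\delta$-solution is, by definition, an admissible trajectory of~\eqref{eq2} with controls $|u_j X_j|\le\delta$, so every point $z(t)$ lies in ${\mathcal A}_{x_0}$; moreover $z$ stays uniformly close to the exact $\varphi_V$-trajectory through $x_1$ on $[0,T_1]$ (for $\delta$ small relative to $T_1$), in particular $z(T_1-\theta)$ is close to $\varphi_V(-\theta, z(T_1))=\varphi_V(-\theta,x_1)$. Letting $\delta\to0$ gives $\varphi_V(-\theta,x_1)\in\overline{{\mathcal A}_{x_0}}$. (Unlike Lemma~\ref{lm_local}, I do not need the Euler-scheme argument approximating $\varphi_{V\pm\varepsilon X_j}(-\theta,\cdot)$, because the $X_j$-directions are already handled by Chow--Rashevskii in Step~1 — this is where the stronger H\"ormander hypothesis pays off.) Step~3: combining Steps~1--2, ${\mathcal A}_{x_0}$ is dense in some ball $B_\rho(x_0)$; then for an arbitrary $y_0\in B_\rho(x_0)$ consider the time-reversed system $\dot x=-V(x)+\sum u_jX_j(x)$, note $\{X_1,\ldots,X_m\}$ still satisfies H\"ormander, apply Step~1 to it to find an attainable-from-$y_0$-backwards neighborhood of $y_0$ that meets the dense set ${\mathcal A}_{x_0}$ at some $z_0$, and conclude $y_0\in{\mathcal A}_{z_0}\subset{\mathcal A}_{x_0}$. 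Thus $B_\rho(x_0)\subset{\mathcal A}_{x_0}$, i.e.\ $x_0$ is interior to ${\mathcal A}_{x_0}$.

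The main obstacle I anticipate is Step~2: one must verify carefully that a $\delta$-solution of~\eqref{eq1} really is an admissible trajectory of the control system~\eqref{eq2}, i.e.\ that the perturbation vector $u(t,\cdot)$ of size $\le\delta$ can be expressed (or approximated) as $\sum u_j(t)X_j$ with the appropriate bound — this needs the $X_j$ to span $T_xM$, which holds on $U(x_0)$ by H\"ormander (indeed the $X_j$ themselves span there, or at worst after one bracket step, and in the latter case one invokes Chow--Rashevskii again to realize small displacements), and one must track that the whole $\delta$-solution stays inside $U(x_0)$, which forces $\theta$ (equivalently the ball $B_\rho$) to be small and uses continuous dependence on initial data together with $\sup_{[0,T_1]} d(z(t),\varphi_V(t,x_1))\to 0$ as $\delta\to0$. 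A secondary point to get right is that in Step~3 the time-reversed Krener/Chow--Rashevskii neighborhood of $y_0$ has nonempty interior and genuinely intersects the dense set; this is routine once $\rho$ is chosen small enough that $B_\rho(x_0)\subset U^0(x_0)$. Everything else parallels Lemma~\ref{lm_local} verbatim with ``nonwandering return'' replaced by ``$\delta$-solution loop,'' which is exactly the substitution flagged in the proof of Theorem~\ref{th_controlMn2}.
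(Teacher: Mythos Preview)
Your Step~1 contains the essential gap, and it is not a technicality. A curve tangent to $\pm X_k$ satisfies $\dot x=\pm X_k(x)$, which is \emph{not} an admissible trajectory of~\eqref{eq2}: admissible trajectories satisfy $\dot x=V(x)+\sum_j u_j X_j(x)$, and in general $V$ is not in the pointwise span of the $X_j$ (H\"ormander only says the iterated brackets span). So Chow--Rashevskii for $\{X_1,\ldots,X_m\}$ does \emph{not} give $U^0(x_0)\subset{\mathcal A}_{x_0}$ ``for free.'' Note also that if Step~1 were correct as stated, the lemma would already follow --- $U^0(x_0)$ is an open neighborhood of $x_0$ contained in ${\mathcal A}_{x_0}$ --- so your Steps~2 and~3 would be redundant. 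The same confusion reappears in Step~2: a $\delta$-solution of~\eqref{eq1} has $\dot x=V(x)+u(t)$ with $|u|\le\delta$ for an \emph{arbitrary} vector $u(t)\in T_{x(t)}M$, not one of the form $\sum_j u_j X_j$, so $\delta$-solutions are not admissible curves of~\eqref{eq2} either; and the $\delta$-loop from $x_1$ back to $x_1$ will in general leave $U(x_0)$ entirely, so you cannot invoke the local spanning there.

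The paper's argument is quite different and does not use chain recurrence at all. It takes the Chow--Rashevskii concatenation map $\Phi(\varepsilon_1,\ldots,\varepsilon_N)=\varphi_{Y_N}(\varepsilon_N,\ldots\varphi_{Y_1}(\varepsilon_1,x_0)\ldots)$, which covers $B_{2\delta}(x_0)$, and perturbs it to $\Phi_\sigma$ built from the flows of $\sigma V+Y_i$. By continuous dependence, for $\sigma$ small $\Phi_\sigma$ still covers $B_\delta(x_0)$. The point is that a trajectory of $\sigma V+Y_i$ is, after rescaling time by $\sigma$, a trajectory of $V+\sigma^{-1}Y_i$ --- which \emph{is} admissible for~\eqref{eq2}, with the large control $u_i=\sigma^{-1}$. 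This time-rescaling trick is exactly the missing idea: it converts driftless Chow--Rashevskii paths into admissible paths of the drifted system at the cost of large controls, which is fine since ${\mathcal A}_{x_0}$ has no bound on $|u_j|$.
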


\begin{proof} First of all, we observe that due to the Chow-Rashevskii theorem (theorem~17 of \cite{BScontrol}), applied to the vector fields 
	$\{X_1,\ldots,X_m\}$, there exist $\varepsilon,\delta>0$ and a value $N\in {\mathbb N}$ such that for any $y_0\in B_{2\delta} (x_0)$, there exist values 
	$$\varepsilon_1,\ldots, \varepsilon_N \in 
	(-\varepsilon,\varepsilon)$$ such that 
	\begin{equation}\label{eqe1en}
		y_0=\varphi_{Y_N}(\varepsilon_N, \varphi_{Y_{N-1}}(\varepsilon_{N-1}, \ldots, \varphi_{Y_1}(\varepsilon_1,x_0)\ldots )).
	\end{equation}
	
	Here the vector fields $Y_i$ are defined as follows: $Y_i=X_{(i \mod m) +1}$. Denote the right-hand side of~\eqref{eqe1en} by $\Phi(\varepsilon_1,\ldots,\varepsilon_N)$. This is a continuous function of its arguments. Given a positive number $\sigma \ge 0$, we set 
	$$\Phi_{\sigma} (\varepsilon_1,\ldots,\varepsilon_N)=
	\varphi_{\sigma V+Y_N}(\varepsilon_N, \varphi_{\sigma V+Y_{N-1}}(\varepsilon_{N-1}, \ldots, \varphi_{\sigma V+Y_1}(\varepsilon_1,x_0)\ldots )).
	$$
	
	There exists a $\sigma_0 >0$ such that
	$$\Phi_{\sigma_0}([-\varepsilon,\varepsilon]^N)\supset B_\delta (x_0).$$
In other words, any point of $B_\delta (x_0)$ can be attained from $x_0$ by shifts along the trajectories of the vector fields $\sigma V\pm X_i$ (or of the rescaled vector fields $V\pm \sigma^{-1} X_i$), which concludes the proof.
\end{proof}

\section{A non-controllable system with a chain recurrent drift}

It is quite natural to ask whether the system with a chain-recurrent drift $V$ and the set of vector fields $\{V,X_1,\ldots,X_m\}$ (including the drift) satisfying the H\"ornmander condition is controllable. The counterexample given in the next section shows that the answer could be negative even if the H\"ornmander condition is fulfilled everywhere.  

Let $M$ be the two-dimensional flat torus defined as usual as the square 
$[0,2\pi]\times [0,2\pi]$ with opposite sides identified.
Consider the system of ordinary differential equations on $M$
\begin{equation}\label{eq3}
\left\{
\begin{array}{l}
\dot x=0,\\
\dot y=(1-\cos x) \sin y
\end{array}
\right.
\end{equation}
Let $z:=(x,y)^{T}$; $V(z):=\,(0,(1-\cos x) \sin y)^T$ be the vector field defining the drift of the system~\eqref{eq3}, and $\varphi_V$ be the respective flow (see Fig.\,\ref{przyklad}).

\begin{figure}[!ht]
\centering
\includegraphics[height=3in]{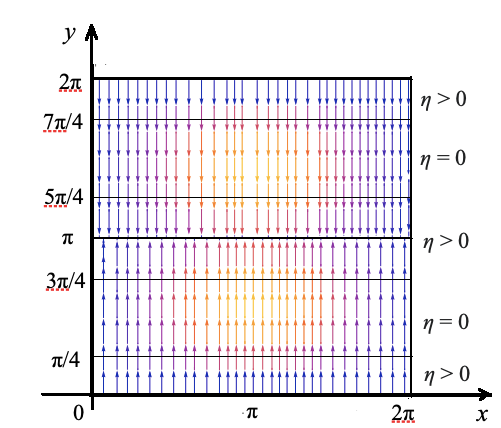}
\caption{The drift vector field of~\eqref{eq3} and the function $\eta$} \label{przyklad}
\end{figure}

All points $(0,y)$, $y\in [0,2\pi]$, $(x,0)$, $(x,\pi)$, $x\in [0,2\pi]$ are stationary, and the system~\eqref{eq3} has no other nonwandering points. Nevertheless, the following statement is true.

\begin{lemma} The flow induced by the system \eqref{eq3} is chain transitive (and, hence, chain recurrent).
\end{lemma}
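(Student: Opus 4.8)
The plan is to exploit the absence of an $x$-component in the drift. Because of it, the vector field $V$ vanishes identically on each of the three circles
\[
C_0:=\{x=0\},\qquad C_1:=\{y=0\},\qquad C_2:=\{y=\pi\},
\]
so every one of them can be traversed by a $\delta$-solution for any $\delta>0$: along $C_1$ and $C_2$ one moves with velocity $(\pm\delta,0)$, along $C_0$ with velocity $(0,\pm\delta)$, and in each case the required control has norm exactly $\delta$. Hence the connected set $C_0\cup C_1\cup C_2$ (the circles meet at $(0,0)$ and $(0,\pi)$) behaves as a freely accessible ``highway network''. Fix the hub $h:=(0,0)$. Since concatenating two $\delta$-solutions yields a $\delta$-solution, to establish chain transitivity it suffices to produce, for every $p\in M$ and every $\delta>0$, a $\delta$-solution from $p$ to $h$ and one from $h$ to $p$.

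To reach $h$ from $p$: if $p\in C_0\cup C_1\cup C_2$, slide along the highway network to $h$. Otherwise $p=(c,y_0)$ with $c\neq 0$ and $y_0\notin\{0,\pi\}$, and the scalar equation $\dot y=(1-\cos c)\sin y$ on the invariant circle $\{x=c\}$ has $y=\pi$ as its unique attracting equilibrium (and $y=0$ as a repelling one); hence, following the true flow ($u\equiv 0$) for a finite time we land at a point $(c,y)$ with $|y-\pi|<\eta$, where $\eta\in(0,\pi/2]$ is chosen so small that $4\sin^2\eta\le\delta^2/4$. From there, the control $u=\bigl(\tfrac{\delta}{2},\,-(1-\cos x)\sin y\bigr)$ — whose norm along the ensuing trajectory is at most $\sqrt{\delta^2/4+4\sin^2\eta}<\delta$ — drives $\dot x=\delta/2$, $\dot y=0$, so in finite time $x$ reaches $0$ and we arrive at $(0,y)\in C_0$, whence we slide to $h$.

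To reach $p$ from $h$: slide along $C_1$ from $h$ to $(c,0)$, where $c$ is the $x$-coordinate of $p=(c,y_1)$. If $y_1\in(0,\pi)$, a short control $u=(0,\delta)$ pushes $y$ off the repelling equilibrium $0$ up to some $\varepsilon\in(0,y_1)$, after which the true flow carries $y$ monotonically upward through every value of $[\varepsilon,\pi)$, in particular through $y_1$; symmetrically, applying $u=(0,-\delta)$ near $(c,0)\equiv(c,2\pi)$ and then following the flow reaches every $y_1\in(\pi,2\pi)$. The remaining cases — $y_1\in\{0,\pi\}$, or $c=0$ — are points of the highway network and are reached directly (e.g.\ for $y_1=\pi$ slide $h\to(0,\pi)$ along $C_0$, then $(0,\pi)\to(c,\pi)$ along $C_2$). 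Chain transitivity follows, and chain recurrence is then immediate (take $p=q$).

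The only genuine estimate is the bound on the control norm in the ``horizontal move near $C_2$'' step, which follows at once from $0\le 1-\cos x\le 2$ and $|\sin y|\le\sin\eta$ there; the rest is the bookkeeping of which of the exceptional circles $p$ (or the target) happens to lie on, together with the elementary phase-line analysis of $\dot y=(1-\cos c)\sin y$. I do not anticipate a real obstacle; the one point to handle with a little care is that the true-flow phases take \emph{finite} time, which holds because the relevant scalar solutions converge monotonically to an equilibrium.
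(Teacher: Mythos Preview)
Your argument is correct and follows essentially the same strategy as the paper: identify the three circles of equilibria $\{x=0\}\cup\{y=0\}\cup\{y=\pi\}$ as a connected network on which one can move freely with arbitrarily small control, then use the phase-line dynamics of $\dot y=(1-\cos c)\sin y$ (attracting at $\pi$, repelling at $0$) to connect any non-stationary point to and from that network. The only cosmetic difference is that, when leaving a generic point $p$, you move \emph{horizontally} (with a compensating vertical control) onto $C_0$, whereas the paper moves \emph{vertically} onto $C_2=\{y=\pi\}$; both work, and your version has the merit of making the control and its norm bound explicit.
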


\begin{proof} We show how a point $(x_1,y_1)$ may be attained from a point $(x_0,y_0)$ with controls with arbitrarily small uniform norm.

{\em Step 1.} The set of stationary points of~\eqref{eq3} is the union of circles 
\begin{align*}
 S &:= S_1\cup S_2\cup S_3,\quad\mbox{where}\\
  &  S_1:= \{(0,y)\colon y\in [0,2\pi]\}, S_2:= \{(x,0)\colon  x\in [0,2\pi]\}, S_3:=  \{ (x,\pi)\colon x\in [0,2\pi]\}.
\end{align*}
Every point of $S_1$ can be reached from any other point of $S_1$ in finite time by applying an arbitrarily small control along the vertical direction.  Similarly, every point of $S_2$ can be reached from any other point of $S_2$ in finite time by applying an arbitrarily small control along the horizontal direction, and the same is true for $S_3$. Therefore, since $S$ is connected, every point of $S$  can be reached from any other point of $S$ in finite time by applying an arbitrarily small control.

{\em Step 2.} If a point $(x_0,y_0)$ is not stationary, then its set of limits $\omega$ is a singleton $\{(x_0,\pi)\}$. We can reach any neighborhood of the latter point from $(x_0, y_0)$ without any control (that is, just following the flow of $V$) and then apply a small control along the vertical direction
to reach the point $(x_0,\pi)\in S$ in finite time.
  
{\em Step 3.} A non-stationary point $(x_1,y_1)$ with $y_1\in (\pi, 2\pi)$ can be reached in finite time from the point $(2\pi,y_1)\in S$ by a small control along the vertical direction and then following the flow of $V$. Analogously, if $y_1\in (0,\pi)$, then a nonstationary point $(x_1,y_1)$ can be reached in finite time from the point $(0,y_1)\in S$ by a small control along the vertical direction and then following the flow of $V$. 

{\em Step 4.} Combining Steps~1, 2, and~3 we get that every point of $M$ can be reached in finite time from any other point of $M$ by arbitrarily small controls, which proves the lemma.
\end{proof}

Now, we consider a function $\eta \in C^\infty (S^1)$ such that \[\eta (x)>0 \quad \mbox{for } 
x\in [0,\pi/4)\bigcup (3\pi/4,5\pi/4) \bigcup (7\pi/4, 2\pi]\] (see Fig. \ref{przyklad}). Introduce the vector fields on the torus by the following formulas:
\[X_1=\begin{pmatrix}1\\ 0\end{pmatrix}, \qquad X_2=\begin{pmatrix}0\\ \eta(y)\end{pmatrix}.
\]    

\begin{lemma} The triple $\{V,X_1,X_2\}$ satisfies the H\"ormander condition on all the torus $M$.
\end{lemma}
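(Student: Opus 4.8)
The plan is to verify the Hörmander condition pointwise and to exploit the very special product structure of the three fields: $X_1=(1,0)^T$ is everywhere nonzero and ``horizontal'', while $V$ and $X_2$ are both ``vertical'', i.e.\ of the form $(0,*)^T$. Consequently, for a fixed $z=(x,y)\in M$ it suffices to produce one element $Y\in\mathcal{X}$ whose second (i.e.\ $\partial_y$) component does not vanish at $z$; then $\{X_1(z),Y(z)\}$ already spans the two-dimensional space $T_zM$. So the whole argument reduces to locating, at every point, a vertical field in $\mathcal{X}$ that is nonzero there.

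First I would dispose of the ``easy'' points using the zeroth-order fields. If $\eta(y)\neq 0$, then $X_2(z)=(0,\eta(y))^T$ does the job. If $\eta(y)=0$, then by the hypothesis on $\eta$ the point $y$ lies in the closed set $[\pi/4,3\pi/4]\cup[5\pi/4,7\pi/4]$, on which $|\sin y|\geq\sin(\pi/4)>0$; in particular $\sin y\neq 0$. Hence, still in this case, if moreover $\cos x\neq 1$, the drift value $V(z)=(0,(1-\cos x)\sin y)^T$ has nonzero second component and we are again done.

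The only remaining points are those with $x\equiv 0\pmod{2\pi}$ and $\eta(y)=0$ (so, as just observed, with $\sin y\neq 0$). Here I would resort to an iterated Lie bracket of $X_1$ and $V$. A short direct computation in coordinates gives
\[
[X_1,V]=(0,\ \sin x\,\sin y)^T,\qquad [X_1,[X_1,V]]=(0,\ \cos x\,\sin y)^T .
\]
Evaluated at $x\equiv 0$ the latter equals $(0,\sin y)^T$, whose second component is nonzero because $\sin y\neq 0$ at these points. Since $[X_1,[X_1,V]]\in\mathcal{X}$, this together with $X_1$ spans $T_zM$, which exhausts all cases and proves the lemma.

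The bracket computations are entirely routine, so I do not expect any real obstacle there. The only step that needs genuine care is checking that the case split is exhaustive: this rests on the observation that $\{y:\eta(y)=0\}\subset[\pi/4,3\pi/4]\cup[5\pi/4,7\pi/4]$ is disjoint from the zero set $\{y:\sin y=0\}$, which is precisely where the prescribed placement of the arcs on which $\eta$ is positive enters, and which explains why one has to descend to a second-order bracket only along the circle $x=0$ and why a first-order bracket would not be enough there (indeed $[X_1,V]$ itself vanishes on $\{x=0\}$).
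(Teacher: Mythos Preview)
Your proof is correct and follows essentially the same route as the paper: use $X_1$ for the horizontal direction everywhere, and for the vertical direction use $X_2$ or $V$ where one of them is nonzero, falling back on the second-order bracket $[X_1,[X_1,V]]=(0,\cos x\,\sin y)^T$ along the circle $x=0$. The only cosmetic difference is the case split---the paper partitions according to whether $x=0$, $y=0$, or $y=\pi$, whereas you partition according to whether $\eta(y)=0$ and then whether $x=0$---but the underlying computations and the key observation that $\{\eta=0\}$ avoids $\{\sin y=0\}$ are identical.
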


\begin{proof} For any $(x,y)\in M$, such that $x\neq 0$, $y\neq 0$, $y\neq \pi$, the vectors $V(x,y)$ and $X_1(x,y)$ form the basis of the tangent space $T_{(x,y)}M$. The vectors $X_1(x,y)$ and $X_2(x,y)$ span the tangent space at the points $(x,0)$ and $(x,\pi)$ for any $x\in [0,2\pi)$.

Let us calculate now the Lie bracket
$$[V,X_1](x,y)=DX_1(x,y) V (x,y)=\dfrac{\partial V}{\partial x}(x,y)=
\begin{pmatrix}0\\ \sin x\sin y\end{pmatrix}.$$
This vector field vanishes at $x=0$. However, the consequent Lie bracket
$$[[V,X_1],X_1](x,y)=\dfrac{\partial }{\partial x}[V,X_1](x,y)=
\begin{pmatrix}0\\ \cos x\sin y\end{pmatrix}$$
is nonzero for $x=0$ and $y\notin\{0,\pi\}$. This proves the lemma. 
\end{proof}

Nevertheless, the following statement is true.

\begin{theorem} The system $\dot z=V(z)+u_1(t)X_1(z)+u_2(t)X_2(z)$, where $z:=(x,y)^T$ is not controllable.
\end{theorem}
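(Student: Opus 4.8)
The plan is to locate a \emph{trapping region}: a closed proper subset $K\subset M$ that is forward-invariant for the control system, and then to pick the two prescribed endpoints with one inside $K$ and the other outside, so that controllability fails for that pair. The natural candidate is the band around the attracting circle $\{y=\pi\}$ of the drift, namely
\[
K:=\{(x,y)\in M\colon y\in[3\pi/4,\,5\pi/4]\},
\]
which is a closed proper subset of the torus and contains the circle $\{y=\pi\}$.

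To show that $K$ is forward-invariant, one uses that among $X_1,X_2,V$ only $V$ and $X_2=(0,\eta(y))^{T}$ have a nonzero second component, and that (by the choice of $\eta$) $\eta\equiv 0$ on $[\pi/4,3\pi/4]\cup[5\pi/4,7\pi/4]$; in particular $X_2$ vanishes on the boundary circles $\{y=3\pi/4\}$, $\{y=5\pi/4\}$ and on the two outer strips adjacent to $\partial K$. Hence, for every admissible pair of controls $u_1,u_2$, along any solution $z(t)=(x(t),y(t))$ one has $\dot y(t)=(1-\cos x(t))\sin y(t)$ whenever $y(t)\in[\pi/4,3\pi/4]\cup[5\pi/4,7\pi/4]$, and therefore $\dot y(t)\ge 0$ while $y(t)\in[\pi/4,3\pi/4]$ (both factors being nonnegative there, with $\sin y(t)\ge\sin(\pi/4)>0$) and $\dot y(t)\le 0$ while $y(t)\in[5\pi/4,7\pi/4]$ (since $\sin y(t)<0$ there). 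In other words, $y(\cdot)$ is nondecreasing on every time interval on which it takes values in $[\pi/4,3\pi/4]$, and nonincreasing on every interval on which it takes values in $[5\pi/4,7\pi/4]$.

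From these monotonicity facts the forward-invariance of $K$ follows by a routine continuity argument: if a solution starting in $K$ ever left it, say through the circle $\{y=3\pi/4\}$, then just after the crossing the coordinate $y(\cdot)$ would take values in $(\pi/4,3\pi/4)$ and be nondecreasing there, which is incompatible with its having passed below $3\pi/4$; a crossing through $\{y=5\pi/4\}$ is excluded symmetrically. (Equivalently, since $\eta(3\pi/4)=\eta(5\pi/4)=0$, at every point of $\partial K$ all admissible velocities $V(z)+u_1X_1(z)+u_2X_2(z)$ lie in the tangent cone to $K$, which is the standard invariance criterion for a closed set under a control system.) I expect this forward-invariance step to be the only real obstacle, its one delicate feature being that the drift degenerates along $\{x=0\}$, so that on $\partial K$ the admissible velocities may be merely tangent to $K$ rather than strictly inward; this is precisely why the argument is run through the monotonicity of $y$ on the outer strips $[\pi/4,3\pi/4]$ and $[5\pi/4,7\pi/4]$, where $\eta\equiv 0$, rather than through a transversality condition at $\partial K$ alone.

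Finally, one takes $p:=(0,\pi)$, an interior point of $K$, and $q:=(0,0)$, which does not belong to $K$ since the arc $[3\pi/4,5\pi/4]$ of $S^1$ does not contain $0$. By forward-invariance of $K$, no solution of $\dot z=V(z)+u_1(t)X_1(z)+u_2(t)X_2(z)$ with $z(0)=p$ ever reaches $q$, so the system is not controllable. This does not contradict the earlier results: the H\"ormander condition for $\{V,X_1,X_2\}$ yields only local accessibility via Krener's theorem, while chain recurrence of the drift is powerless against the global obstruction embodied by the trapping band $K$.
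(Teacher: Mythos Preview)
Your proof is correct and uses essentially the same mechanism as the paper: both exploit that on the arcs $y\in[\pi/4,3\pi/4]$ and $y\in[5\pi/4,7\pi/4]$ the field $X_2$ vanishes, so $\dot y=(1-\cos x)\sin y$ has a definite sign there, preventing any controlled trajectory from crossing these arcs toward $y=0$; both then conclude that $(0,0)$ is unreachable from $(0,\pi)$.

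The only difference is packaging. You phrase the obstruction globally, as forward-invariance of the band $K=\{y\in[3\pi/4,5\pi/4]\}$, and verify it via the monotonicity of $y$ on the adjacent strips. The paper argues locally: it asserts that a trajectory from $(0,\pi)$ to $(0,0)$ must at some instant $t_0$ have $y(t_0)\in(\pi/4,3\pi/4)$ with $\dot y(t_0)<0$ (or the symmetric case), and then observes this contradicts $\dot y(t_0)=(1-\cos x(t_0))\sin y(t_0)\ge 0$. Your version is slightly more careful in handling the boundary behaviour (the degeneracy at $x=0$ where the drift is merely tangent to $\partial K$), while the paper's is a line shorter but leaves the existence of such a $t_0$ to the reader.
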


\begin{proof} 
Suppose that there exist piecewise continuous functions 
$u_1, u_2\colon [0,T]\mapsto {\mathbb R}$ such that the problem
$$\left\{
\begin{array}{l}
\dot z(t) = V(z(t))+u_1(t)X_1(z(t))+u_2(t)X_2(z(t)),\\
z(0)=(0,\pi), \qquad z(T)=(0,0)
\end{array}
\right.$$
is solvable for some $T>0$. Consider the solution $z(\cdot)= (x(\cdot), y(\cdot))^T$. Since $y(0)=\pi$, $y(T)=0$, at least one of the following statements is true:
\begin{enumerate}
    \item there is $t_0\in (0,T)$ such that $y(t_0)\in (\pi/4,3\pi/4)$, $\dot y(t_0)<0$ or
    \item there is a $t_0\in (0,T)$ such that $y(t_0)\in (5\pi/4,7\pi/4)$, $\dot y(t_0)>0$.
\end{enumerate}
Without loss of generality, we assume that the first one is true (if the second inclusion takes place, the proof is similar).
Then $\eta(y(t_0))=0$ and, consequently, $X_2(z(t_0))=0$.
Since the second component of the vector field $X_1$ is zero, we have
$$\dot y(t_0)=(1-\cos x(t_0))\sin y(t_0)\ge 0$$
which contradicts our assumption. This proves the lemma. \end{proof}

\section*{Acknowledgments} Sergey Kryzhevich was supported by Gda\'{n}sk University of Technology
by the DEC 14/2021/IDUB/I.1 grant under the Nobelium - ‘Excellence Initiative - Research University’ program.
Eugene Stepanov acknowledges the MIUR Excellence Department Project awarded to the Department of Mathematics,
University of Pisa, CUP I57G22000700001. His work is also partially within the framework of HSE University Basic Research Program and of the Ministry of Science and Higher Education of the Russian Federation (agreement 075-15-2025-344 for Saint Petersburg Leonard Euler International Mathematical Institute at PDMI RAS).

\end{document}